\numberwithin{equation}{section}
\newcommand{\id}{\mathord{\operatorname{id}}}
\newcommand{\Z}{\mathbb{Z}}
\newcommand{\R}{\mathbb{R}}
\newcommand{\N}{\mathbb{N}}
\newcommand{\rE}{\operatorname{E}}
\newcommand{\rV}{\operatorname{V}}
\newcommand{\Gr}{\mathcal{G}}
\newcommand{\EG}{\rE(\Gr)}
\newcommand{\VG}{\rV(\Gr)}
\newcommand{\Gam}{\Gamma}
\newcommand{\HNN}{\operatorname{HNN}}
\theoremstyle{plain}
\newtheorem{theorem}{Theorem}[section]
\newtheorem{lemma}[theorem]{Lemma}
\theoremstyle{definition}
\newtheorem{definition}[theorem]{Definition}
\newtheorem{example}[theorem]{Example}
\newtheorem{remark}[theorem]{Remark}
\begin{document}

\begin{center}
{\LARGE\bf  Amenable, transitive and faithful actions of groups acting on trees}

\bigskip

{\sc Pierre Fima$^{(1,2)}$
\setcounter{footnote}{1}\footnotetext{Partially supported by the \textit{Agence Nationale de la Recherche} (Grant ANR 2011 BS01 008 01).}
\setcounter{footnote}{2}\footnotetext{Universit\'e Denis-Diderot Paris 7, IMJ, Chevaleret.
    \\ E-mail: pfima@math.jussieu.fr}}
\end{center}

\begin{abstract}
\noindent We study under which condition an amalgamated free product or an HNN-extension over a finite subgroup admits an amenable, transitive and faithful action on an infinite countable set. We show that such an action exists if the initial groups admit an amenable and almost free action with infinite orbits (e.g. virtually free groups or infinite amenable groups). Our result relies on the Baire category Theorem. We extend the result to groups acting on trees. 
\end{abstract}

\section{Introduction}

The notion of amenable groups or, more generally, amenable actions was first introduced by von Neumann \cite{vN29} who proposed to study whether or not, given a group acting on a set $X$, there exists a mean on $X$ invariant by the action (or equivalently a F{\o}lner sequence for the action).

\begin{definition}
An action $\Gamma\curvearrowright X$ of a countable group $\Gamma$ on a countable set $X$ is called \textit{amenable} if there exists a sequence $(C_n)$ of non-empty finite subsets of $X$ such that
$$\frac{|C_n\Delta gC_n|}{|C_n|}\rightarrow 0\quad\text{for all}\,\,\,g\in\Gamma.$$
Such a sequence $(C_n)$ is called a \textit{F{\o}lner} sequence.
\end{definition}

This notion of amenability is different from the one introduced later by Zimmer \cite{Zi84}.

Obviously, every action of an amenable group (i.e. such that the left translation on itself is an amenable action) is amenable. Greenleaf \cite{Gr69} asked for the converse: does the existence of an amenable action of $\Gamma$ implies the amenability of $\Gamma$? To avoid any trivial negative answer, one should assume that the action is faithful and transitive. If the action is free then the converse holds. However van Douwen \cite{vD90} gave a counter example: the free group $\mathbb{F}_2$ admits a faithful, transitive and amenable action.

This leads Glasner and Monod \cite{GM05} to introduce the class $\mathcal{A}$ of countable groups admitting an amenable, transitive and faithful action. Grigorchuk and Nekrashevych \cite{GN05} have constructed a class of amenable, transitive and faithful actions of finitely generated free groups using Schreier graphs. Simultaneously Glasner and Monod \cite{GM05} gave a necessary and sufficient condition for a free product to be in the class $\mathcal{A}$. In particular, they showed that the class $\mathcal{A}$ is closed under free products. They also asked when free products with amalgamations and HNN-extensions are in $\mathcal{A}$.

S. Moon \cite{Mo08}, \cite{Mo09} showed that a free product of finitely generated free groups amalgamated over a cyclic group is in $\mathcal{A}$. She also proved \cite{Mo10} that an amalgamated free product of amenable groups over a finite group as well as an amalgamated free product of a residually finite group with an infinite amenable group over a finite group is in $\mathcal{A}$.

The initial motivation of the present work was to study the case of an HNN-extension $\Gamma={\rm HNN}(H,\Sigma,\theta)$, where $\Sigma$ is a subgroup of $H$ and $\theta\,:\,\Sigma\rightarrow H$ is an injective group homomorphism. Few results are known: Monod and Popa \cite{MP03} showed that $\Gamma\in\mathcal{A}$ whenever $\Sigma=H\in\mathcal{A}$ and S. Moon observed that the Baumslag-Solitar groups are in $\mathcal{A}$.

We say that an action has \textit{infinite orbits} if every orbit is infinite. Our first result is as follows.

\begin{theorem}\label{THMA}
Let $\Gamma_1$, $\Gamma_2$ and $H$ be countable groups and $\Sigma$ be a finite subgroup of  $\Gamma_1$, $\Gamma_2$ and $H$. Let $\theta\,:\,\Sigma\rightarrow H$ be an injective group homomorphism.
\begin{enumerate}
\item If there exists an amenable and faithful action of $H$ on a countable set with infinite orbits and free on $\Sigma$ and $\theta(\Sigma)$ then ${\rm HNN}(H,\Sigma,\theta)\in\mathcal{A}$.
\item If, for $i=1,2$, there exists an amenable and faithful action of $\Gamma_i$ on a countable set with infinite orbits and free on $\Sigma$ then $\Gamma_1*_{\Sigma}\Gamma_2\in\mathcal{A}$.
\end{enumerate}
\end{theorem}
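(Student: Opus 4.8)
The plan is to fix a single countable set on which to build the action and to parametrise the candidate actions by a Polish group, so that the three desired properties become generic in the sense of Baire category. I describe the amalgamated case $\Gamma:=\Gamma_1*_\Sigma\Gamma_2$ and indicate the (parallel) HNN modification at the end. Since $\Sigma$ is finite and each action $\Gamma_i\actson X_i$ has infinite orbits and is free on $\Sigma$, every $\Gamma_i$-orbit in $X_i$ is a free $\Sigma$-set with infinitely many $\Sigma$-orbits; hence $X_i$ is, as a $\Sigma$-set, a countable free $\Sigma$-set with infinitely many orbits. Fix once and for all a countable set $X$ with a free $\Sigma$-action having infinitely many orbits, and fix model actions $\al_1^0,\al_2^0$ of $\Gamma_1,\Gamma_2$ on $X$, each $\Gamma_i$-isomorphic to a disjoint union of countably many copies of $X_i$ and each restricting on $\Sigma$ to the chosen $\Sigma$-action. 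A disjoint union of copies of an amenable (resp.\ faithful) action is again amenable (resp.\ faithful) and stays free on $\Sigma$ with infinite orbits, so $\al_i^0$ inherits all the hypotheses.

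Let $G:=\operatorname{Sym}_\Sigma(X)$ be the group of $\Sigma$-equivariant permutations of $X$; it is a closed subgroup of $\operatorname{Sym}(X)$, hence a Polish group for pointwise convergence. For $g\in G$ put $\al_2^{g}:=g\,\al_2^0\,g^{-1}$. Because $g$ commutes with the $\Sigma$-action, $\al_1^0$ and $\al_2^{g}$ agree on $\Sigma$, so by the universal property of the amalgamated free product they assemble into an action $\sigma_g\colon\Gamma\actson X$. Thus $g\mapsto\sigma_g$ makes the Polish group $G$ into our parameter space, and it remains to find $g$ with $\sigma_g$ amenable, faithful and transitive; I will show each of these three conditions defines a dense $G_\delta$ subset of $G$ and conclude by the Baire category theorem.

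Faithfulness and transitivity are the standard ``genericity by finite perturbation'' arguments. For each $1\ne\gamma\in\Gamma$ the set $\{g:\sigma_g(\gamma)\ne\id\}$ is open, and it is dense because, given any finite constraint on $g$, one may redefine $g$ on the unconstrained $\Sigma$-orbits and use the reduced-word normal form in $\Gamma_1*_\Sigma\Gamma_2$, together with faithfulness and freeness on $\Sigma$ of the building blocks, to force $\gamma$ to move a point; intersecting over the countably many $\gamma$ gives a dense $G_\delta$ of faithful actions. Similarly, for each pair $x,y\in X$ the set $\{g:y\in\Gamma\cdot x\}$ is open, and density follows by redefining $g$ off a finite set so as to route $x$ to $y$ through the infinite orbits of the two vertex groups; intersecting over all pairs yields a dense $G_\delta$ of transitive actions.

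The heart of the proof is amenability, and this is where I expect the main difficulty. Fix a finite $F\subset\Gamma$ and $\eps>0$ and consider
\[
A_{F,\eps}=\Bigl\{g\in G:\ \exists\ \text{finite }\emptyset\ne C\subset X\ \text{with}\ \tfrac{|C\,\Delta\,\sigma_g(\gamma)C|}{|C|}<\eps\ \ \forall\gamma\in F\Bigr\}.
\]
Since $C$ is finite, membership depends only on $g$ at finitely many points, so $A_{F,\eps}$ is open; and because a Følner condition on a generating set of $\Gamma$ propagates to all of $\Gamma$, the intersection $\bigcap_n A_{F_n,1/n}$ over an exhaustion $F_n$ is contained in the set of amenable actions. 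The obstacle is that $\al_1^0$ and $\al_2^{g}$ are each amenable but need not share a Følner set for a given $g$: a Følner set for $\Gamma_1$ lives in finitely many copies of $X_1$, which $\Gamma_2$ may scatter. The plan is to build a common Følner set by hand after a finite perturbation: starting from the finite constraint on $g$, choose a large $\Sigma$-invariant set that is $\eps$-Følner for $\al_1^0$ inside a single copy of $X_1$ (possible since $X_1$ is amenable and $\Sigma$ is finite, so Følner sets may be taken $\Sigma$-invariant), and then define $g$ on the remaining unconstrained $\Sigma$-orbits so that this same set is a near-union of $\Sigma$-invariant $\eps$-Følner pieces for $\al_2^{g}$. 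Finiteness of $\Sigma$, freeness of the building-block actions on $\Sigma$, and the availability of arbitrarily large Følner sets in both $X_1$ and $X_2$ are precisely what make this grafting possible, and carrying it out carefully is the technical core. Granting density of every $A_{F,\eps}$, the three dense $G_\delta$ sets meet by Baire, giving one $g$ with $\sigma_g$ amenable, faithful and transitive, so $\Gamma\in\mathcal{A}$. For $\HNN(H,\Sigma,\theta)$ one fixes instead a model $H$-action on $X$ (copies of the given $H$-space) and replaces $G$ by the set $\mathcal{T}=\{T\in\operatorname{Sym}(X):T\,\al(\sigma)\,T^{-1}=\al(\theta(\sigma))\ \forall\sigma\in\Sigma\}$ of intertwiners, which is non-empty and a left coset of the Polish group $\operatorname{Sym}_\Sigma(X)$ exactly because the $H$-action is free on both $\Sigma$ and $\theta(\Sigma)$ with infinitely many orbits; the same three genericity arguments then apply to $\sigma_T\colon\HNN(H,\Sigma,\theta)\actson X$.
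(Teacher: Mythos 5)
Your overall architecture (parametrize the candidate actions by $\Sigma$-equivariant permutations of a fixed countable free $\Sigma$-set, show that amenability, transitivity and faithfulness are each dense $G_\delta$, and conclude by Baire) is the same as the paper's. But there is a genuine gap at the step you dismiss as standard: density of faithfulness. Your models $\alpha_1^0,\alpha_2^0$ are built only out of copies of the \emph{given} actions $\Gamma_i\curvearrowright X_i$, and you claim that for each reduced word $\gamma\neq 1$ one can perturb $g$ on finitely many unconstrained $\Sigma$-orbits and ``route'' some point away from itself. This routing fails in general, because the hypotheses allow elements $b\in\Gamma_2\setminus\Sigma$ whose action preserves \emph{every} $\Sigma$-orbit. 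Concretely, take $\Sigma=\{1,s\}\cong\Z/2\Z$ and $\Gamma_2=\langle s\rangle\times\langle b\rangle\times\Z$ acting on $X_2=(\Z/2\Z\times\Z)\sqcup(\Z/2\Z\times\Z)$, where $s$ flips the $\Z/2\Z$-coordinate on both pieces, $b$ acts trivially on the first piece and flips the $\Z/2\Z$-coordinate on the second, and $\Z$ shifts the $\Z$-coordinate. This action is amenable, faithful, has infinite orbits and is free on $\Sigma$, yet $bz\in\Sigma z$ for every $z$. Consequently, for \emph{every} $\Sigma$-equivariant $g$ one has $g\,\alpha_2^0(b)\,g^{-1}x\in\Sigma x$ for all $x$: no perturbation of $g$, finite or not, ever lets you route a point through the letter $b$ to a fresh location. (Transitivity does not suffer from this, since there you may choose which group elements to use; faithfulness must handle every word.) For words built from such degenerate letters, possibly in both factors, whether $\sigma_g(\gamma)\neq\id$ on a dense set reduces to a nontrivial algebraic question about which of the finitely many products $a_n\tau_n\cdots a_1\tau_1$, with $\tau_i\in\Sigma$ ranging over the realizable ``$\Sigma$-types'' of the degenerate letters, can be made simultaneously nontrivial and pointed at a moved point; your one-sentence justification does not address this, and it is exactly the hard case.

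The paper sidesteps this difficulty with an idea absent from your proposal: before running the Baire argument, it replaces the given vertex-group actions by the action of the \emph{whole} group $\Gamma$ induced from them (its Lemma 2.5 together with Example 2.6). This yields a faithful action $\Gamma\curvearrowright Y$ which, restricted to the vertex group(s), is still amenable, has infinite orbits and is free on $\Sigma$ (and $\theta(\Sigma)$); one then works on $X=Y\times\N$ and proves density of faithfulness trivially, by taking the perturbed permutation to agree with $w$ on the finite constraint, to be an arbitrary equivariant bijection on the rest of finitely many copies $X_N$, and to agree with the genuine action of the identity (amalgam case), resp.\ of the stable letter $t$ (HNN case), outside $X_N$; then $\pi_\gamma$ coincides with the original faithful $\Gamma$-action on the untouched copies, so it is faithful. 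Without such a background faithful $\Gamma$-action, your setup has nothing to fall back on. Your amenability step has a second, smaller gap of the same nature: you correctly identify that a common approximate F{\o}lner set must be grafted, but the precise mechanism is missing. Since the parameter $g$ is a bijection, grafting requires F{\o}lner sequences $(C_n)$ for $\Gamma_1$ and $(D_n)$ for $\Gamma_2$ with $|C_n|,|D_n|\to\infty$ \emph{and} $|D_n|/|C_n|\to 1$, after which one perturbs $g$ so that $g(C_N)$ nearly equals $D_N$; the paper manufactures this size matching (Lemma 2.4) by replicating F{\o}lner sets across the copies in $Y\times\N$ and using Euclidean division. So the skeleton of your proof is right, but the two ingredients that actually make it work — the induced faithful $\Gamma$-action for faithfulness, and size-matched F{\o}lner sequences for amenability — are not there, and the first cannot be replaced by the routing argument you invoke.
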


To prove Theorem \ref{THMA} we use the Baire category Theorem. Such an approach has been used for example in \cite{Ep71}, \cite{Di90}, \cite{GM05}, \cite{Mo08}, \cite{Mo09} and \cite{Mo10}.

An action is called \textit{almost free} if every non-trivial group element acts with finitely many fixed points. During the investigation of the HNN-extension case we realized that the class $\mathcal{A}_{\mathcal{F}}$ of countable groups admitting an amenable and almost free action with infinite orbits on a countable set appears naturally. Observe that the class $\mathcal{A}_{\mathcal{F}}$ contains all infinite amenable groups. Moreover, the amenable, transitive and faithful action of $\mathbb{F}_2$ constructed in \cite{vD90} is actually almost free and an obvious adaptation of his construction shows that $\mathbb{F}_n$ admits an amenable, transitive and almost free action on an infinite countable set for all $n\geq 2$. van Douwen also showed that the same conclusion holds for $\mathbb{F}_{\infty}$.

It is easy to check that if $H$ has an amenable and almost free action with infinite orbits and if $H$ is a finite index subgroup of $\Gamma$ then, the induced action is still amenable and almost free with infinite orbits (and also transitive if the original action is). It follows that virtually free groups are in $\mathcal{A}_{\mathcal{F}}$. Moreover, the obstruction to be in $\mathcal{A}$ discovered in \cite[Lemma 4.3]{GM05} is also an obstruction to be in $\mathcal{A}_{\mathcal{F}}$. Namely, let $N\lhd H$ be a normal subgroup such that the pair $(H,N)$ has the relative property $(T)$. If $H\in\mathcal{A}_{\mathcal{F}}$ then $N$ has finite exponent. In particular, $\Z^{2}\rtimes{\rm SL}_2(\Z)\notin\mathcal{A}_{\mathcal{F}}$. The proof of this assertion is an obvious adaptation of the proof of \cite[Lemma 4.3]{GM05}.

We say that a graph is non-trivial if it has at least two edges, $e$ and its inverse edge $\overline{e}$. Our second result is as follows.

\begin{theorem}\label{THMB}
Let $\Gamma$ be a countable group acting without inversion on a non-trivial tree ${\rm T}$ with finite edge stabilizers and finite quotient graph ${\rm T}/\Gamma$. If all the vertex stabilizers are in $\mathcal{A}_{\mathcal{F}}$ then $\Gamma\in\mathcal{A}$.
\end{theorem}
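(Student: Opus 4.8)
The plan is to combine Bass--Serre theory with Theorem \ref{THMA} and an induction on the finite quotient graph. By the structure theorem for groups acting on trees, since $\Gamma$ acts without inversion on ${\rm T}$ with finite quotient graph $Y={\rm T}/\Gamma$, the group $\Gamma$ is the fundamental group of a finite graph of groups over $Y$ whose vertex groups are the vertex stabilizers (hence lie in $\mathcal{A}_{\mathcal{F}}$ by hypothesis) and whose edge groups are the edge stabilizers (hence finite by hypothesis). Fixing a spanning tree $Y_0\subseteq Y$, the group $\Gamma$ is obtained from the vertex groups by finitely many amalgamated free products over finite subgroups (one for each edge of $Y_0$), followed by finitely many HNN-extensions over finite subgroups (one for each edge of $Y\setminus Y_0$). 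Since ${\rm T}$ is non-trivial, $Y$ has at least one edge, so $\Gamma$ is a genuine amalgam or HNN-extension and is infinite.

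First I would set up the induction on the number of geometric edges of $Y$. In the base case $Y$ has a single geometric edge: if that edge is a loop then $\Gamma={\rm HNN}(H,\Sigma,\theta)$ with $H$ a vertex group and $\Sigma,\theta(\Sigma)$ finite, and if it joins two distinct vertices then $\Gamma=\Gamma_1*_{\Sigma}\Gamma_2$ with $\Gamma_1,\Gamma_2$ vertex groups and $\Sigma$ finite; in either case the building blocks are in $\mathcal{A}_{\mathcal{F}}$ and Theorem \ref{THMA} applies. For the inductive step, choose a geometric edge $e$ of $Y$. If $Y\setminus e$ is disconnected then $e$ is separating and $\Gamma=A*_{\Sigma}B$, where $A$ and $B$ are the fundamental groups of the two graphs of groups obtained by cutting at $e$; if $Y\setminus e$ is connected then $\Gamma={\rm HNN}(H,\Sigma,\theta)$, where $H$ is the fundamental group of $Y\setminus e$. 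In both cases the smaller graphs of groups again have finite edge groups, vertex groups in $\mathcal{A}_{\mathcal{F}}$, and strictly fewer edges, so by the induction hypothesis $A,B$ (resp.\ $H$) lie in $\mathcal{A}_{\mathcal{F}}$, and $\Sigma$ (resp.\ $\Sigma,\theta(\Sigma)$) is finite.

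To make this induction run I would strengthen the conclusion being propagated from ``$\in\mathcal{A}$'' to ``$\in\mathcal{A}_{\mathcal{F}}$''. Concretely, I would prove the closure statement that an amalgamated free product (resp.\ HNN-extension) of groups in $\mathcal{A}_{\mathcal{F}}$ over a common finite subgroup again belongs to $\mathcal{A}_{\mathcal{F}}$; this is the $\mathcal{A}_{\mathcal{F}}$-analogue of Theorem \ref{THMA}. The point is that the amenable, transitive, faithful action produced in Theorem \ref{THMA} lives on a single infinite orbit, and on an infinite set an almost free action is automatically faithful, so it suffices to check that the Baire-category construction underlying Theorem \ref{THMA} can be arranged to output an \emph{almost free} action. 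Granting this, the induction yields $\Gamma\in\mathcal{A}_{\mathcal{F}}$ with a transitive action, and transitivity together with almost freeness gives $\Gamma\in\mathcal{A}$, as required.

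The main obstacle is the interface between the two classes, namely feeding the building blocks into Theorem \ref{THMA}: that theorem requires the action of each factor to be \emph{free} on the relevant finite edge group $\Sigma$, whereas membership in $\mathcal{A}_{\mathcal{F}}$ only supplies an \emph{almost} free action. For a fixed finite $\Sigma$ the set of points with non-trivial $\Sigma$-stabilizer is finite, so one is tempted simply to delete the (finitely many) orbits meeting it; the difficulty is that amenability need not survive the removal of orbits, since the F{\o}lner behaviour of the action may be concentrated on exactly those orbits. I would resolve this by producing the $\mathcal{A}_{\mathcal{F}}$-action as a disjoint union of transitive almost free actions on coset spaces $G/H_n$ in which each $H_n$ is chosen to meet no conjugate of $\Sigma$ (forcing freeness on $\Sigma$) while the F{\o}lner constants of the $G/H_n$ tend to $0$ (forcing amenability of the union); controlling these two requirements simultaneously, via a Baire-category argument of the type used for Theorem \ref{THMA}, is where the real work lies.
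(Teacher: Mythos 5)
Your skeleton is the same as the paper's: Bass--Serre theory turns $\Gamma$ into the fundamental group of a finite graph of groups with finite edge groups, and one inducts on the number of geometric edges, the dichotomy (removing an edge disconnects the quotient graph or not) giving an amalgam or an HNN-extension; this is exactly the paper's Theorem \ref{THMB2}. The problem is the invariant you propagate. You strengthen the inductive conclusion to ``$\in\mathcal{A}_{\mathcal{F}}$'', which forces you to prove that an amalgam or HNN-extension over a finite subgroup of groups in $\mathcal{A}_{\mathcal{F}}$ is again in $\mathcal{A}_{\mathcal{F}}$, and you reduce this to the claim that ``the Baire-category construction underlying Theorem \ref{THMA} can be arranged to output an almost free action''. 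That is not a gap one can fill: it fails in the paper's framework. Almost freeness constrains \emph{all} elements of $\Gamma$, in particular the stable letter, which acts by $\pi_w(t)=w$; and for every $n$ the set $\{w\in Z\,:\,|{\rm Fix}(w)|\geq n\}$ is open and \emph{dense} in $Z$ (given $w$ and a finite $F$, the same $\Sigma$-equivariant swaps used in the paper's own density proofs produce $\gamma\in Z$ with $\gamma|_F=w|_F$ and $\gamma(\sigma x_i)=\theta(\sigma)x_i$ for $n$ points $x_i$ with suitably disjoint orbits, so that $\gamma(x_i)=x_i$). Hence the generic $w\in Z$ has infinitely many fixed points, i.e.\ the set of $w$ for which $\pi_w$ is almost free is \emph{meager}; structurally, ``$|{\rm Fix}(\pi_w(g))|<\infty$'' is an $F_\sigma$ condition, not a $G_\delta$ one, so Baire category works against you here. (Consistently, the paper never claims $\mathcal{A}_{\mathcal{F}}$ is closed under these operations: Theorem \ref{THMB} concludes only $\Gamma\in\mathcal{A}$.) The paper's induction instead propagates a much weaker property that needs no genericity at all: since factor elements act by $\pi_w(h)=h$ (HNN case), respectively by $g$ and $w^{-1}hw$ (amalgam case), for \emph{every} $w\in Z$, any element of a factor whose fixed-point set was empty in the input action still has empty fixed-point set in the output action (Remarks \ref{rmkhnn} and \ref{rmkfree}). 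This invariant suffices to close the induction, because what the next step needs is only freeness of the action on the finite edge groups, and the nontrivial elements of those groups lie inside the vertex groups.

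The interface issue you flag (Theorem \ref{THMA} wants actions free on $\Sigma$, while $\mathcal{A}_{\mathcal{F}}$ only provides almost free ones) is real, and you are right that deleting orbits can destroy amenability; but your repair --- writing the action as a disjoint union of transitive actions on coset spaces $G/H_n$ with each $H_n$ missing every conjugate of $\Sigma$ and with F{\o}lner constants tending to $0$ --- is itself an unproven construction, and nothing in the definition of $\mathcal{A}_{\mathcal{F}}$ supplies such subgroups. The paper does this in one stroke (Lemma \ref{AFtoF}): replace $Y$ by the complement of the large diagonal in $Y^m$ with $m>\max_{g\in F}|{\rm Fix}_Y(g)|$, where $F$ is the finite set of nontrivial elements of all the edge groups; by Lemma \ref{largediag} the diagonal action there is still amenable, it is still almost free with infinite orbits, and every element of $F$ now acts freely. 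With these two corrections --- Lemma \ref{AFtoF} at the input, and the fixed-point-preservation invariant of Theorem \ref{THMB2} in place of $\mathcal{A}_{\mathcal{F}}$-membership in the induction --- your outline becomes the paper's proof.
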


The case when $\Gamma$ acts with finite vertex stabilizers is not covered by Theorem \ref{THMB} but is obvious since $\Gamma$ must be virtually free.

We prove Theorem \ref{THMB} by induction and by using a slightly stronger version of Theorem \ref{THMA} (see Remarks \ref{rmkhnn} and \ref{rmkfree}). A particular case of Theorem \ref{THMB} is the following. Let $\Gamma_1,\Gamma_2\in\mathcal{A}_{\mathcal{F}}$. Then, for all finite common subgroup $\Sigma<\Gamma_1,\Gamma_2$ one has $\Gamma_1*_{\Sigma}\Gamma_2\in\mathcal{A}$. Also, if $H\in\mathcal{A}_{\mathcal{F}}$ then, for all finite subgroup $\Sigma< H$ and all injective morphism $\theta\,:\,\Sigma\rightarrow H$, one has ${\rm HNN}(H,\Sigma,\theta)\in\mathcal{A}$.

The paper is organized as follows. The section $2$ is a preliminary section in which we prove four basic general lemmas which will be used in the paper. In section $3$ we prove the HNN-extension case of Theorem \ref{THMA}. The section $4$ covers the case of an amalgamated free product. Finally, we prove Theorem \ref{THMB} in section $5$.

\textbf{Acknowledgment.} We are grateful to Georges Skandalis for very helpful conversations and remarks, to Soyoung Moon for carefully reading a first version of this paper and to the referee for his or her remarks improving the clarity of the paper.

\section{Generalities}

Our first lemma is certainly well known but we could not find any reference in the literature.

\begin{lemma}\label{InfiniteFolner}
Let $\Gamma\curvearrowright X$ be an amenable action of a countable group. If the action has infinite orbits then there exists a F{\o}lner sequence $(C_n)$ such that $|C_n|\rightarrow\infty$.
\end{lemma}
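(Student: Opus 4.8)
The plan is to start from an arbitrary F{\o}lner sequence $(D_m)$ furnished by amenability and to manufacture from it a F{\o}lner sequence whose cardinalities diverge. Fixing an enumeration $\Gamma=\{g_1,g_2,\dots\}$, it suffices to prove the following approximation claim: for every finite $S\subseteq\Gamma$, every $\eps>0$ and every $N\in\N$ there is a finite nonempty $C\subseteq X$ with $|C|\geq N$ and $\frac{|C\Delta gC|}{|C|}<\eps$ for all $g\in S$. Indeed, applying this with $S=\{g_1,\dots,g_n\}$, $\eps=1/n$ and $N=n$ yields sets $C_n$ with $|C_n|\geq n$ that form the desired F{\o}lner sequence.

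To prove the claim I would split according to whether the sizes $|D_m|$ are bounded. If they are unbounded, a subsequence along which $|D_m|\rightarrow\infty$ already does the job, since the F{\o}lner ratios tend to $0$ on the finite set $S$. The substantial case is when $\sup_m|D_m|<\infty$; passing to a subsequence I may assume $|D_m|=k$ is constant. Here the key observation is that $|D_m\Delta gD_m|$ is a nonnegative integer equal to $k\cdot\frac{|D_m\Delta gD_m|}{|D_m|}\rightarrow 0$, so for each fixed $g$ one has $gD_m=D_m$ for all large $m$.

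Now I would dichotomize. Suppose first that some point $x_0$ lies in $D_m$ for infinitely many $m$. Since the orbit $\Gamma x_0$ is infinite, pick $k+1$ distinct points $h_0x_0,\dots,h_kx_0$; for each $i$ one has $h_iD_m=D_m$ eventually, so for infinitely many $m$ the set $D_m$ contains $x_0$ and is invariant under all the $h_i$, forcing $\{h_0x_0,\dots,h_kx_0\}\subseteq D_m$ --- impossible as $|D_m|=k$. Hence every point of $X$ lies in only finitely many $D_m$, and I may greedily extract a pairwise disjoint subfamily $D_{m_1},\dots,D_{m_r}$ (at each step only finitely many points are already used, and each is used up by finitely many sets). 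Setting $C=\bigsqcup_{j=1}^r D_{m_j}$ gives $|C|=rk\geq N$ for $r$ large; and because the $D_{m_j}$, hence also their translates $gD_{m_j}$, are pairwise disjoint, one checks $|C\Delta gC|\leq\sum_j|D_{m_j}\Delta gD_{m_j}|$, so the ratio $\frac{|C\Delta gC|}{|C|}$ is at most the average, hence at most the maximum, of the individual ratios, which are $<\eps$. This establishes the claim.

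The only place where the infinite-orbit hypothesis enters --- and the main obstacle --- is the exclusion of the ``concentration'' alternative in the constant-size case: without infinite orbits the sets $D_m$ could stabilise to a genuine nonempty finite invariant set, on which the F{\o}lner ratios vanish but whose cardinality cannot be inflated. Infinite orbits rule this out, and the complementary ``spreading'' alternative is exactly what lets the disjoint-union trick enlarge the F{\o}lner sets while keeping the ratios under control.
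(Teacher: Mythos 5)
Your proof is correct and follows essentially the same route as the paper: pass to a bounded-size subsequence, use integrality of $|D_m\Delta gD_m|$ to upgrade the F{\o}lner condition to eventual invariance $gD_m=D_m$, invoke the infinite-orbit hypothesis via a pigeonhole argument to rule out a point lying in infinitely many $D_m$, and then inflate the F{\o}lner sets by taking unions of the $D_m$. The only difference is bookkeeping in the last step: the paper takes unions of consecutive $D_n$ (each invariant under a prescribed finite set $F_k$, with largeness coming from $\cup_n D_n$ being infinite), whereas you extract a pairwise disjoint subfamily and bound the ratio of the union by the maximum of the individual ratios --- both work.
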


\begin{proof}
Let $(D_n)$ be a F{\o}lner sequence for $\Gamma\curvearrowright X$. Suppose that $|D_n|\nrightarrow\infty$. By taking a subsequence if necessary we may and will suppose that $(|D_n|)$ is bounded. It follows that, for all $g\in\Gamma$, $|D_n\Delta gD_n|\rightarrow 0$.

Set $K=\cup_{n\in\N} D_n$ and suppose that $K$ is finite. By the pigeonhole principle, there exists $x_0\in K$ such that $I=\{n\in\N\,:\,x_0\in D_n\}$ is infinite. Let $g\in\Gamma$. Since $gD_n=D_n$ for $n$ big enough, we find $n_0\in I$ such that $gx_0\in gD_{n_0}=D_{n_0}$. Hence, $gx_0\in K$ for all $g\in\Gamma$, contradicting the assumption that $x_0$ has infinite orbit. It follows that $K$ is infinite.

Write $\Gamma=\cup^{\uparrow}F_k$, where $F_k$ are finite subsets. For all $k\in\N$, let $n_k,m_k\in\N$ such that $F_kD_n=D_n$ for all $n\geq n_k$ and $\left|\cup_{n=n_k}^{m_k} D_n\right|\geq k$. Define $C_k=\cup_{n=n_k}^{m_k} D_n$. Then $(C_k)$ is a F{\o}lner sequence and $|C_k|\rightarrow\infty$.
\end{proof}

Let $\Gamma\curvearrowright Y$ be an action and $m\in\N$. Observe that \textit{the complement of the large diagonal in} $Y^m$:
$$\{(y_1,\ldots,y_m)\,:\,y_i\neq y_j\,\,\text{for all}\,\,i\neq j\}\subset Y^m$$
is globally invariant under the diagonal action of $\Gamma$ on $Y^m$.

\begin{lemma}\label{largediag}
If $\Gamma$ acts amenably with infinite orbits on $Y$ then $\Gamma$ acts amenably on the complement of the large diagonal in $Y^m$.
\end{lemma}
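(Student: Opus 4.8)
The plan is to push a Følner sequence from $Y$ up to the complement $Z$ of the large diagonal by taking products and deleting the tuples with a repeated entry. First I would invoke Lemma \ref{InfiniteFolner}: since $\Gam\actson Y$ is amenable with infinite orbits, there is a Følner sequence $(C_n)$ with $|C_n|\recht\infty$. The candidate Følner sequence for the diagonal action on $Z$ is then
$$D_n=C_n^m\cap Z=\{(y_1,\dots,y_m)\in C_n^m\,:\,y_i\neq y_j\text{ for }i\neq j\}.$$
The point of using Lemma \ref{InfiniteFolner} rather than an arbitrary Følner sequence is to make the deleted diagonal part asymptotically negligible.

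The estimate splits into a denominator bound and a numerator bound. For the denominator, the set of tuples in $C_n^m$ with some fixed pair of coordinates equal has cardinality $|C_n|^{m-1}$, so the large-diagonal part of $C_n^m$ has cardinality at most $\binom{m}{2}|C_n|^{m-1}$; hence $|D_n|\geq |C_n|^m-\binom{m}{2}|C_n|^{m-1}$. Because $|C_n|\recht\infty$, this forces $D_n\neq\emptyset$ for $n$ large and in fact $|D_n|\geq\tfrac12|C_n|^m$ eventually. For the numerator, since $Z$ is globally $\Gam$-invariant one has $gD_n=(gC_n)^m\cap Z$, and therefore
$$D_n\,\Delta\,gD_n=\bigl(C_n^m\,\Delta\,(gC_n)^m\bigr)\cap Z\subseteq C_n^m\,\Delta\,(gC_n)^m.$$

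It then remains to estimate $|A^m\,\Delta\,B^m|$ for $A=C_n$ and $B=gC_n$. A tuple in $A^m\setminus B^m$ has all entries in $A$ and at least one entry in $A\setminus B$; a union bound over the position of such an entry gives $|A^m\setminus B^m|\leq m|A|^{m-1}|A\,\Delta\,B|$, and symmetrically for $B^m\setminus A^m$ (using $|gC_n|=|C_n|$). Thus $|C_n^m\,\Delta\,(gC_n)^m|\leq 2m|C_n|^{m-1}|C_n\,\Delta\,gC_n|$. Combining the two bounds,
$$\frac{|D_n\,\Delta\,gD_n|}{|D_n|}\leq\frac{2m|C_n|^{m-1}|C_n\,\Delta\,gC_n|}{\tfrac12|C_n|^m}=\frac{4m\,|C_n\,\Delta\,gC_n|}{|C_n|}\recht 0$$
for every $g\in\Gam$, so $(D_n)$ is a Følner sequence for the diagonal action on $Z$.

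The only genuinely delicate point is controlling the large diagonal, and this is exactly where the infinite-orbit hypothesis enters through Lemma \ref{InfiniteFolner}: I need $|C_n|\recht\infty$ both to guarantee that $D_n$ is eventually non-empty and to bound $|D_n|$ below by a fixed fraction of $|C_n|^m$. The product Følner estimate itself is a routine telescoping/union-bound argument and presents no real obstacle.
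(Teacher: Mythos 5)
Your proof is correct and takes essentially the same approach as the paper: both invoke Lemma \ref{InfiniteFolner} to get $|C_n|\rightarrow\infty$, take $D_n=C_n^m$ intersected with the complement of the large diagonal, and bound the deleted diagonal part by $\binom{m}{2}|C_n|^{m-1}$ so that it is asymptotically negligible. The only cosmetic difference is in verifying the F{\o}lner property of $(D_n)$: the paper compares $D_n$ with $C_n^m$ via a triangle inequality for symmetric differences (using that $(C_n^m)$ is F{\o}lner for $Y^m$), whereas you use the identity $(A\cap Z)\,\Delta\,(B\cap Z)=(A\,\Delta\,B)\cap Z$ together with an explicit union-bound estimate of $|C_n^m\,\Delta\,(gC_n)^m|$ --- both yield the same conclusion.
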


\begin{proof}
Denote by $X$ the complement of the large diagonal in $Y^m$. By Lemma \ref{InfiniteFolner}, let $(C_n)$ be a F{\o}lner sequence for the action $\Gamma\curvearrowright Y$ such that $|C_n|\rightarrow\infty$. One has
$$|C_n^m\cap X^c|\leq \sum_{1\leq i< j\leq m}|\{(y_1,\ldots,y_m)\in C_n^m\,:\, y_i=y_j\}|=\frac{m(m-1)}{2}|C_n|^{m-1}.$$
Since $|C_n|\rightarrow\infty$ one has $\frac{|C_n^m\cap X^c|}{|C_n^m|}\rightarrow 0$ and $\frac{|C_n^m\cap X|}{|C_n^m|}\rightarrow 1$. Define $D_n=C_n^m\cap X$. Since $|C_n|\rightarrow\infty$ we have $|D_n|\rightarrow\infty$ and we may assume that $D_n\neq\emptyset$ for all $n\in\N$. It is easy to check that $(C_n^m)$ is a F{\o}lner sequence for the action $\Gamma\curvearrowright Y^m$ hence, for all $g\in \Gamma$, one has
\begin{eqnarray*}
\frac{|D_n\Delta gD_n|}{|D_n|}&\leq& \frac{|D_n\Delta C^m_n|}{|D_n|}+\frac{|C^m_n\Delta gC^m_n|}{|D_n|}+\frac{|gC^m_n\Delta gD_n|}{|D_n|}
=2 \frac{|D_n\Delta C^m_n|}{|D_n|}+\left(\frac{|C^m_n|}{|D_n|}\right)\frac{|C^m_n\Delta gC^m_n|}{|C^m_n|}\\
&=&2\frac{|C_n^m\cap X^c|}{|C_n^m\cap X|}+\left(\frac{|C^m_n|}{|C_n^m\cap X|}\right)\frac{|C^m_n\Delta gC^m_n|}{|C^m_n|}
\rightarrow 0.
\end{eqnarray*}\end{proof}

Given an action $\Gamma\curvearrowright X$ and $g\in\Gamma$, we define ${\rm Fix}_X(g)=\{x\in X\,:\, gx=x\}$. If the context is clear, we omit the subscript $X$. 

\begin{lemma}\label{AFtoF}
If $\Gamma\in\mathcal{A}_{\mathcal{F}}$ then, for every finite subset $F\subset\Gamma$ with $1\notin F$, there exists an amenable and almost free action with infinite orbits $\Gamma\curvearrowright X$ on a countable set $X$ such that ${\rm Fix}_X(g)=\emptyset$ for all $g\in F$.
\end{lemma}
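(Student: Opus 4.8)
The plan is to take a sufficiently high power of a good action of $\Gamma$ and delete the large diagonal, so that Lemma \ref{largediag} supplies amenability for free while the elements of $F$ are forced to lose all their fixed points. First I would fix an amenable and almost free action $\Gamma\curvearrowright Y$ with infinite orbits, which exists because $\Gamma\in\mathcal{A}_{\mathcal{F}}$. Since this action is almost free and $1\notin F$, each set ${\rm Fix}_Y(g)$ with $g\in F$ is finite; as $F$ is itself finite I may choose an integer $m$ with $m>|{\rm Fix}_Y(g)|$ for every $g\in F$. I then let $X$ be the complement of the large diagonal in $Y^m$, equipped with the diagonal $\Gamma$-action, and claim that this $X$ does the job.

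The heart of the argument is the description of fixed points. A tuple $(y_1,\ldots,y_m)\in X$ is fixed by $g$ if and only if $gy_i=y_i$ for all $i$, i.e. every coordinate lies in ${\rm Fix}_Y(g)$; but the coordinates of a point of $X$ are pairwise distinct, so this forces ${\rm Fix}_Y(g)$ to contain at least $m$ distinct elements. For $g\in F$ this is impossible by the choice of $m$, so ${\rm Fix}_X(g)=\emptyset$, which is exactly what we want. For an arbitrary $g\neq 1$ the same description gives ${\rm Fix}_X(g)\subseteq {\rm Fix}_Y(g)^m$, a finite set, so the action $\Gamma\curvearrowright X$ is almost free.

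It then remains to verify that $\Gamma\curvearrowright X$ is amenable with infinite orbits. Amenability is precisely the content of Lemma \ref{largediag}. For the infinite orbits I would use that the projection $Y^m\to Y$ onto the first coordinate is $\Gamma$-equivariant and surjective: the orbit of $(y_1,\ldots,y_m)$ maps onto the orbit of $y_1$ in $Y$, which is infinite, and a set surjecting onto an infinite set is infinite, so every $\Gamma$-orbit in $X$ is infinite as well.

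I do not anticipate a real obstacle in this lemma. The only genuine idea is to raise the power high enough to exhaust the finite fixed-point sets of the finitely many elements of $F$; all the rest is routine bookkeeping. The mildly delicate point is ensuring that the infinite-orbit property survives the removal of the large diagonal, and this is handled cleanly by the equivariance of the coordinate projection rather than by any F{\o}lner estimate.
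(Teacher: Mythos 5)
Your proposal is correct and is essentially the paper's own proof: the paper also takes $m=\max\{|{\rm Fix}_Y(g)|:g\in F\}+1$, passes to the complement of the large diagonal in $Y^m$, and invokes Lemma \ref{largediag} for amenability. The fixed-point, almost-freeness and infinite-orbit verifications you spell out are exactly the facts the paper dismisses as obvious, and your equivariant-projection argument for infinite orbits is the right justification.
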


\begin{proof}
Let $\Gamma\curvearrowright Y$ be an amenable and almost free action with infinite orbits on a countable set $Y$. Let $m=\text{Max}\{|\text{Fix}_Y(g)|\,:\,g\in F\}+1$ and $X$ be the complement of the large diagonal in $Y^m$. The action of $\Gamma$ on $X$ is obviously almost free, every orbit is infinite and every element of $F$ acts freely. By Lemma \ref{largediag} this action is also amenable.\end{proof}

The following Lemma is inspired by \cite[Lemma 6]{Mo10}.

\begin{lemma}\label{LemSupp}
Let $\Gamma\curvearrowright Y$ be an action of a countable group $\Gamma$ on a countable set $Y$. Define $X=Y\times\N$ and consider the action $\Gamma\curvearrowright X$ given by $g(y,n)=(gy,n)$ for all $g\in\Gamma$ and $(y,n)\in X$. If $\Gamma\curvearrowright Y$ is amenable then, for all sequence $(a_n)$ of real numbers such that $a_n\rightarrow\infty$, there exists a F{\o}lner sequence $(C_n)$ for $\Gamma\curvearrowright X$ and a subsequence $(a_{\varphi(n)})$ such that $\frac{a_{\varphi(n)}}{|C_n|}\rightarrow 1$.
\end{lemma}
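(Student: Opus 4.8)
The plan is to build each Følner set for $\Gamma\curvearrowright X$ out of finitely many parallel copies of a single good Følner set for $\Gamma\curvearrowright Y$. The starting observation is that for any finite $D\subset Y$ and any $j\in\N$ the product $D\times\{1,\ldots,j\}\subset X$ satisfies $g(D\times\{1,\ldots,j\})=(gD)\times\{1,\ldots,j\}$, so that $|(D\times\{1,\ldots,j\})\Delta\,g(D\times\{1,\ldots,j\})|=|D\Delta gD|\cdot j$. Hence the Følner ratio of $D\times\{1,\ldots,j\}$ with respect to any $g\in\Gamma$ equals that of $D$: parallel copies cost nothing in the boundary estimate while multiplying the cardinality by $j$.

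First I would fix a Følner sequence $(D_k)$ for $\Gamma\curvearrowright Y$ together with an exhaustion $\Gamma=\bigcup_n F_n$ by increasing finite subsets. For each $n$ I choose an index $k_n$ with $\frac{|D_{k_n}\Delta gD_{k_n}|}{|D_{k_n}|}<\frac1n$ for all $g\in F_n$, and write $d_n=|D_{k_n}|$. The size $d_n$ is forced on us by the demand of a small boundary relative to $F_n$ and is \emph{not} under our control; absorbing it is exactly what the subsequence will do.

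Next I would select the subsequence. Since $a_m\rightarrow\infty$, I can pick $\varphi$ strictly increasing with $a_{\varphi(n)}\geq n\,d_n$ for every $n$. Setting $j_n=\lfloor a_{\varphi(n)}/d_n\rfloor$ (so $j_n\geq n$) and $C_n=D_{k_n}\times\{1,\ldots,j_n\}$, the preliminary observation gives $\frac{|C_n\Delta gC_n|}{|C_n|}=\frac{|D_{k_n}\Delta gD_{k_n}|}{|D_{k_n}|}<\frac1n$ for all $g\in F_n$. As each fixed $g$ lies in $F_n$ for all large $n$, this ratio tends to $0$, so $(C_n)$ is a Følner sequence for $\Gamma\curvearrowright X$. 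For the cardinalities, $|C_n|=j_n d_n$ with $j_n d_n\leq a_{\varphi(n)}<(j_n+1)d_n$, whence $1\leq\frac{a_{\varphi(n)}}{|C_n|}<1+\frac{1}{j_n}\leq 1+\frac1n$, and therefore $\frac{a_{\varphi(n)}}{|C_n|}\rightarrow 1$.

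The only genuinely delicate point is the interplay between the last two steps. We cannot prescribe $d_n$, so the coarseness of the available cardinalities (namely $j_n d_n$, a multiple of $d_n$) is tamed not by refining the copies but by pushing $\varphi(n)$ far enough out that $a_{\varphi(n)}$ dwarfs $d_n$; the hypothesis $a_m\rightarrow\infty$ is precisely what makes this choice possible, and it is also why the conclusion is phrased for a subsequence and up to ratio rather than as an exact cardinality match. If one did insist on hitting a prescribed value exactly, one could append a partial copy $S\times\{j_n+1\}$ with $S\subset D_{k_n}$, at the negligible boundary cost $2|S|\leq 2d_n\ll|C_n|$, but this refinement is not needed here.
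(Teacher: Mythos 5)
Your proof is correct and follows essentially the same route as the paper's: both build $C_n$ by stacking $\lfloor a_{\varphi(n)}/|D|\rfloor$ parallel copies of a $(\tfrac1n,F_n)$-F{\o}lner set $D\subset Y$ in the $\N$-direction, choose the subsequence index so that $a_{\varphi(n)}$ dominates $n|D|$, and then read off both the unchanged F{\o}lner ratio and the bound $1\leq a_{\varphi(n)}/|C_n|<1+\tfrac1n$. The only differences are notational (your floor $j_n=\lfloor a_{\varphi(n)}/d_n\rfloor$ versus the paper's Euclidean division of $[a_{n_k}]$ by $|D_k|$), not mathematical.
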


\textit{Proof.} Write $\Gamma=\cup^{\uparrow}F_k$, where $(F_k)_k$ is an increasing sequence of finite subsets. It suffices to construct a strictly increasing sequence of integers $(n_k)_k$ and a sequence of non-empty finite subsets $(C_k)_k$ of $X$ such that $C_k$ is a $(\frac{1}{k},F_k)$-F{\o}lner set for all $k\geq 1$ and,
$$1\leq\frac{a_{n_k}}{|C_k|}< 1+\frac{2}{k}\quad\text{for all}\quad k\geq 1.$$
In the sequel, given $x\in\R$, we denote by $[x]$ the unique integer such that $[x]\leq x< [x]+1$. For $k=1$, let $D_1$ be a $(1, F_1)$-F{\o}lner set for the action $\Gamma\curvearrowright Y$. Let $n_1\geq 1$ big enough such that $[a_{n_1}]\geq|D_1|$. By Euclidean division, we write $[a_{n_1}]=q_1|D_1|+r_1$, where $0\leq r_1<|D_1|$ and $q_1\geq 1$. Define $C_1=\sqcup_{n=1}^{q_1} D_1^{(n)}\subset X$, where $D_1^{(n)}=D_1\times\{n\}$. Then, $|C_1|=q_1|D_1|$ and, for all $g\in F_1$,
$$|gC_1\Delta C_1|\leq \sum_{n=1}^{q_1}|gD_1^{(n)}\Delta D_1^{(n)}|\leq q_1|D_1|=|C_1|.$$
Hence, $C_1$ is a $(1,F_1)$-F{\o}lner set. Moreover, 
$$1\leq \frac{[a_{n_1}]}{|C_1|}\leq\frac{a_{n_1}}{|C_1|}<  \frac{[a_{n_1}]+1}{|C_1|}=\frac{q_1|D_1|+r_1+1}{|C_1|}=1+\frac{r_1+1}{q_1|D_1|}<1+\frac{1}{q_1}+\frac{1}{q_1|D_1|}\leq 1+\frac{2}{q_1}\leq 1+2.$$
Now, suppose that, for $k\geq 1$, we have constructed $n_1<\ldots< n_k\in\N^{*}$ and $C_1,\ldots,C_k\subset X$ such that $C_i$ is a $(\frac{1}{i}, F_i)$-F{\o}lner set  and $1\leq\frac{a_{n_i}}{|C_i|}< 1+\frac{2}{i}$ for all $1\leq i\leq k$. Let $D_{k+1}$ be a $(\frac{1}{k+1}, F_{k+1})$-F{\o}lner set for $\Gamma\curvearrowright Y$ and $n_{k+1}>n_k$ big enough such that $[a_{n_{k+1}}]\geq (k+1)|D_{k+1}|$. Write $[a_{n_{k+1}}]=q_{k+1}|D_{k+1}|+r_{k+1}$, where $0\leq r_{k+1}<|D_{k+1}|$ and $q_{k+1}\geq k+1$. Define $C_{k+1}=\sqcup_{n=1}^{q_{k+1}} D_{k+1}^{(n)}\subset X$, where $D_{k+1}^{(n)}=D_{k+1}\times\{n\}$. Then, $|C_{k+1}|=q_{k+1}|D_{k+1}|$ and, for all $g\in F_{k+1}$,
$$|gC_{k+1}\Delta C_{k+1}|\leq \sum_{n=1}^{q_{k+1}}|gD_{k+1}^{(n)}\Delta D_{k+1}^{(n)}|\leq q_{k+1}\frac{|D_{k+1}|}{k+1}=\frac{|C_{k+1}|}{k+1}.$$
Hence, $C_{k+1}$ is a $(\frac{1}{k+1},F_{k+1})$-F{\o}lner set. Moreover, 
$$1\leq \frac{[a_{n_{k+1}}]}{|C_{k+1}|}\leq \frac{a_{n_{k+1}}}{|C_{k+1}|}<  \frac{[a_{n_{k+1}}]+1}{|C_{k+1}|}=\frac{q_{k+1}|D_{k+1}|+r_{k+1}+1}{|C_{k+1}|}<1+\frac{1}{q_{k+1}}+\frac{1}{q_{k+1}|D_{k+1}|}\leq 1+\frac{2}{q_{k+1}}\leq +\frac{2}{k+1}.\hfill{\Box}$$

Let $H<\Gamma$, $H\curvearrowright Y$ and consider the diagonal action $H\curvearrowright Y\times\Gamma$. Define $X=H\setminus (Y\times\Gamma)$. The \textit{induced action} is the action $\Gamma\curvearrowright X$ given by right multiplication (by the inverse element) on the $\Gamma$ part in $X$. The following proposition contains some standard observations on the induced action. We include a proof for the reader's convenience.

\begin{lemma}\label{induction}
Let $H<\Gamma$, $H\curvearrowright Y$ and $\Gamma\curvearrowright X$ the induced action. The following holds.
\begin{enumerate}
\item If $H\curvearrowright Y$ is faithful then $\Gamma\curvearrowright X$ is faithful.
\item If $H\curvearrowright Y$ is amenable then $H\curvearrowright X$ is amenable.
\item If $H\curvearrowright Y$ has infinite orbits and $H$ is almost malnormal \footnote{The subgroup $H$ is called \textit{almost malnormal} in $\Gamma$ if, for all $g\in\Gamma\setminus H$, the set $g^{-1}Hg\cap H$ is finite.} in $\Gamma$ then $H\curvearrowright X$ has infinite orbits.
\item If $K<\Gamma$ is infinite and $gKg^{-1}\cap H$ is finite for all $g\in\Gamma$ then $K\curvearrowright X$ has infinite orbits.
\item Let $(\Sigma_{\epsilon})_{\epsilon\in E}$ be a family of subgroups of $H$ such that $\Sigma_{\epsilon}\curvearrowright Y$ is free for all $\epsilon\in E$. If, for all $g\in\Gamma\setminus H$, $gHg^{-1}\cap H\subset\cup_{\epsilon\in E,\,s\in H}s\Sigma_{\epsilon} s^{-1}$ then, for all  $h\in H$ such that ${\rm Fix}_Y(h)=\emptyset$, one has  ${\rm Fix}_X(h)=\emptyset$.
\item Let $\Sigma<H$ such that $\Sigma\curvearrowright Y$ is free. If $K<\Gamma$ is a subgroup and $\cup_{g\in\Gamma}gKg^{-1}\cap H\subset\cup_{h\in H}h\Sigma h^{-1}$ then $K\curvearrowright X$ is free. 
\end{enumerate}
\end{lemma}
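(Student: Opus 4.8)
The plan is to reduce parts (1) and (3)--(6) to a single computation of point stabilizers, and to treat the amenability statement (2) separately. Writing $[y,g]$ for the class of $(y,g)$ in $X=H\backslash(Y\times\Gamma)$, the induced action reads $k\cdot[y,g]=[y,gk^{-1}]$. First I would check that $k\cdot[y,g]=[y,g]$ holds if and only if there is $h\in H$ with $hy=y$ and $hgk^{-1}=g$; eliminating $h=gkg^{-1}$, this says exactly that $gkg^{-1}\in\Stab_H(y)$. Hence the fundamental identity
\[
\Stab_\Gamma([y,g])=g^{-1}\,\Stab_H(y)\,g ,
\]
and for any subgroup $L<\Gamma$ one gets $\Stab_L([y,g])=L\cap g^{-1}\Stab_H(y)\,g\subset L\cap g^{-1}Hg$. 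I expect every part except (2) to fall out of this formula together with the orbit--stabilizer relation $|L\cdot[y,g]|=[L:\Stab_L([y,g])]$.

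For (1), if $k$ fixes every point of $X$ then in particular it fixes $[y,1]$, so $k\in\Stab_\Gamma([y,1])=\Stab_H(y)\subset H$; letting $y$ vary, $k$ lies in the kernel of $H\actson Y$, which is trivial by faithfulness. For (3) I would split on whether $g\in H$: if $g\notin H$ then $\Stab_H([y,g])\subset H\cap g^{-1}Hg$ is finite by almost malnormality, so the orbit is infinite since $H$ is infinite (having infinite orbits on $Y$); if $g\in H$ then $\Stab_H([y,g])=g^{-1}\Stab_H(y)g$ has index $|Hy|=\infty$. Part (4) is the same finiteness argument: $\Stab_K([y,g])\subset K\cap g^{-1}Hg=g^{-1}(gKg^{-1}\cap H)g$ is finite by hypothesis while $K$ is infinite.

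Parts (5) and (6) combine the stabilizer identity with freeness. Suppose $h\in H$ with $\operatorname{Fix}_Y(h)=\emptyset$ fixes some $[y,g]$; then $ghg^{-1}\in\Stab_H(y)\subset H$, and note $h\neq 1$ since $\operatorname{Fix}_Y(h)=\emptyset$. If $g\in H$ this forces $y\in g\operatorname{Fix}_Y(h)=\emptyset$, absurd; if $g\notin H$ then $ghg^{-1}\in gHg^{-1}\cap H$ is a nontrivial element lying, by hypothesis, in some $s\Sigma_\epsilon s^{-1}$, so that $s^{-1}ghg^{-1}s\in\Sigma_\epsilon\setminus\{1\}$ fixes $s^{-1}y$, contradicting freeness of $\Sigma_\epsilon\actson Y$. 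This gives (5). For (6), if $k\in K$ fixes $[y,g]$ then $h:=gkg^{-1}\in\Stab_H(y)$ lies in $gKg^{-1}\cap H\subset\bigcup_{t\in H}t\Sigma t^{-1}$; writing $h=t\sigma t^{-1}$ with $\sigma\in\Sigma$, the relation $hy=y$ makes $\sigma$ fix $t^{-1}y$, so freeness of $\Sigma\actson Y$ forces $\sigma=1$, hence $h=1$ and $k=1$; thus $K\actson X$ is free.

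The one part not governed by the stabilizer identity, and the step I expect to be the real (if mild) obstacle, is the amenability in (2): the restriction to $H\subset\Gamma$ of the induced action can contain free, hence non-amenable, orbits (e.g.\ over double cosets $HgH$ with $H\cap g^{-1}Hg$ trivial), so one cannot make every orbit amenable. The resolution is that amenability only requires a single F\o lner sequence, which can be supported on the $H$-invariant subset $\iota(Y)=\{[y,1]:y\in Y\}\subset X$. I would check that $\iota\colon Y\to X$, $y\mapsto[y,1]$, is an $H$-equivariant bijection onto this invariant set (indeed $h\cdot[y,1]=[y,h^{-1}]=[hy,1]$), and then transport a F\o lner sequence $(C_n)$ for $H\actson Y$ to $(\iota(C_n))$, which is F\o lner for $H\actson X$ because $|h\,\iota(C_n)\,\Delta\,\iota(C_n)|=|hC_n\,\Delta\,C_n|$ and $|\iota(C_n)|=|C_n|$ for every $h\in H$.
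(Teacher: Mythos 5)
Your proof is correct and follows essentially the same route as the paper: your stabilizer identity $\operatorname{Stab}_\Gamma([y,g])=g^{-1}\operatorname{Stab}_H(y)\,g$ is exactly the paper's fixed-point formula for ${\rm Fix}_X(t)$ read point-by-point, and parts (1), (3)--(6) are then settled by the same orbit--stabilizer, almost-malnormality, and conjugation-into-$\Sigma_\epsilon$ arguments the paper uses. For (2) you likewise use the paper's device, the $H$-equivariant injection $y\mapsto[y,1]$, transporting a F{\o}lner sequence from $Y$ to its image in $X$.
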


\begin{proof}
For $(y,g)\in Y\times\Gamma$ we denote by $[y,g]$ its class in $X$. Let $t\in\Gamma$. Observe that
\begin{eqnarray}\label{eqind}
{\rm Fix}_X(t)=\{[y,g]\in X\,:\,gtg^{-1}\in H\,\,\text{and}\,\, y\in {\rm Fix}_Y(gtg^{-1})\}.
\end{eqnarray}

$1.$ Suppose that ${\rm Fix}_X(t)=X$. Then, for all $y\in Y$, $[y,1]\in{\rm Fix}_X(t)$ and Equation $(\ref{eqind})$ implies that $t\in H$ and ${\rm Fix}_Y(t)=Y$. Since $H\curvearrowright Y$ is faithful we have $t=1$.

$2.$ Since the map $y\mapsto[y,1]$ is $H$-equivariant (and injective), the action $H\curvearrowright X$ is amenable whenever that action $H\curvearrowright Y$ is.

$3.$ Let $y\in Y$ and $g\in \Gamma$. Let us show that the $H$ orbit of $[y,g]$ is infinite under the hypothesis of $3$.

\textit{Case 1:} $g\in H$. One has, for all $h\in H$, $h[y,g]=[y,gh^{-1}]=[hg^{-1}y,1]$. Since the map $y\mapsto [y,1]$ is injective, the $H$-orbit of $[y,g]$ is infinite for all $g\in H$ whenever  $H\curvearrowright Y$ has infinite orbits.

\textit{Case 2:} $g\in \Gamma\setminus H$. If the $H$-orbit $H[y,g]$ is finite the stabilizer in $H$ of $[y,g]$ must be infinite. However,
$$\{h\in H\,:\,h[y,g]=[y,g]\}\subset\{h\in H\,:\,ghg^{-1}\in H\}=g^{-1}Hg\cap H$$
which is finite since $H$ is almost malnormal in $\Gamma$.

$4.$ The proof of $4$ is similar:  if the $K$-orbit $K[y,g]$ is finite and $K$ is infinite then the stabilizer in $K$ of $[y,g]$ must be infinite. However,
$$\{k\in K\,:\,k[y,g]=[y,g]\}\subset\{k\in K\,:\,gkg^{-1}\in H\}=K\cap g^{-1}Hg$$
which is finite.

$5.$ Let $h\in H\setminus\{1\}$ such that ${\rm Fix}_X(h)\neq\emptyset$ and take $[y,g]\in{\rm Fix}_X(h)$. By Equation $(\ref{eqind})$, $ghg^{-1}\in H$ and $y\in{\rm Fix}_Y(ghg^{-1})$. If $g\in\Gamma\setminus H$ then, there exists $\epsilon\in E$, $\sigma\in\Sigma_{\epsilon}\setminus\{1\}$ and $s\in H$ such that $ghg^{-1}=s\sigma s^{-1}$. Hence, $y\in{\rm Fix}_Y(ghg^{-1})={\rm Fix}_Y(s\sigma s^{-1})=s{\rm Fix}_Y(\sigma)$. Hence, ${\rm Fix}_Y(\sigma)\neq\emptyset$, a contradiction since $\Sigma_{\epsilon}\curvearrowright Y$ is free. Thus, $g\in H$ and $y\in{\rm Fix}_Y(ghg^{-1})=g{\rm Fix}_Y(h)$ which implies that ${\rm Fix}_Y(h)\neq\emptyset$.

$6.$ The proof of $6$ is similar: Let $k\in K\setminus\{1\}$. If $[y,g]\in{\rm Fix}_X(k)$ Equation $(\ref{eqind})$ implies that $gkg^{-1}\in H$ and $y\in{\rm Fix}_Y(gkg^{-1})$. By the hypothesis on $K$, there exists $h\in H$ and $\sigma\in\Sigma\setminus\{1\}$ such that $gkg^{-1}=h\sigma h^{-1}$. Hence,  ${\rm Fix}_Y(gkg^{-1})={\rm Fix}_Y(h\sigma h^{-1})=h{\rm Fix}_Y(\sigma)=\emptyset$, a contradiction.
\end{proof}

\begin{example}\label{example}The following holds.
\begin{enumerate}
\item Let $\Gamma={\rm HNN}(H,\Sigma,\theta)=\langle H,t\,|\,\theta(\sigma)=t\sigma t^{-1},\,\,\forall \sigma\in\Sigma\rangle$ be a non-trivial HNN-extension. The hypothesis of $5$ holds and $H$ is almost malnormal in $\Gamma$ whenever $\Sigma$ is finite. Actually, for all $g\in\Gamma\setminus H$, there exists $s\in H$ such that $gHg^{-1}\cap H\subset s\Sigma s^{-1}$ or $gHg^{-1}\cap H\subset s\theta(\Sigma) s^{-1}$. Indeed, let $v_0$ be the vertex in the Bass-Serre tree of $\Gamma$ with stabilizer $H$ and take $g\notin H$. Denote by $e$ the unique edge on the geodesic $[gv_0,v_0]$ that contains $v_0$. Since $gv_0$ and $v_0$ are fixed by $gHg^{-1}\cap H$ the geodesic $[gv_0,v_0]$ is pointwise fixed by $gHg^{-1}\cap H$. In particular, $gHg^{-1}\cap H$ is contained in the stabilizer of $e$ which is of the form $s\Sigma s^{-1}$ or $s\theta(\Sigma) s^{-1}$ with $s\in H$.

\item If $\Gamma=\Gamma_1*_{\Sigma}\Gamma_2$ is a non-trivial amalgamated free product. By the same geometric argument, for all $i=1,2$, for all $g\in\Gamma\setminus\Gamma_i$, there exists  $s\in\Gamma_i$ such that $g\Gamma_i g^{-1}\cap\Gamma_i\subset s\Sigma s^{-1}$. Hence, the hypothesis of $5$ holds (with $H=\Gamma_i$) and $\Gamma_i$ is malnormal in $\Gamma$ whenever $\Sigma$ is finite. Also, for all $g\in\Gamma$, there exists $s\in\Gamma_1$ such that $g\Gamma_2 g^{-1}\cap \Gamma_1\subset s\Sigma s^{-1}$. Hence, the hypothesis of $4$ (if $\Sigma$ is finite) and $6$ hold with $H=\Gamma_1$ and $K=\Gamma_2$ and, by symmetry, with $H=\Gamma_2$ and $K=\Gamma_1$.\end{enumerate}
\end{example}

\section{HNN-extensions in the class $\mathcal{A}$}

This section is dedicated to the proof of Theorem \ref{THMA}.1.

Let $H$ be a countable group, $\Sigma<H$ a finite subgroup and $\theta\,:\,\Sigma\rightarrow H$ an injective group homomorphism. Define the $\HNN$-extension $\Gam=\HNN(H,\Sigma,\theta)=\langle H,t\,|\,\theta(\sigma)=t\sigma t^{-1},\,\,\forall \sigma\in\Sigma\rangle$.

Let $X$ be an infinite countable set and denote by $S(X)$ the Polish\footnote{With the topology of pointwise convergence: $w_n\rightarrow w$ $\Leftrightarrow$ for all finite subset $F\subset X$ $\exists n_0\in \N$ such that $ w_n|_F=w|_F$ $\forall n\geq n_0$.} group of bijections of $X$. 

For the rest of this section we suppose that $H<S(X)$ such that the actions of $\Sigma$ and $\theta(\Sigma)$ on $X$ are free. Define
$$Z=\{w\in S(X)\,:\,w\sigma w^{-1}=\theta(\sigma)\,\,\text{for all}\,\,\sigma\in\Sigma\}.$$
It is clear that $Z$ is a closed subset of $S(X)$. Moreover, since the actions of $\Sigma$ and $\theta(\Sigma)$ are free, it is easy to see that $Z$ is non-empty.

By the universal property, for all $w\in Z$, there exists a unique group homomorphism $\pi_{w}\,:\,\Gamma\rightarrow S(X)$ such that $\pi_{w}(t)=w$ and $\pi_{w}(h)=h$ for all $h\in H$. The strategy is to prove, under suitable assumptions on the action $H\curvearrowright X$, that the set of $w\in Z$ such that $\pi_{w}$ is amenable, transitive and faithful is a dense $G_{\delta}$ in $Z$.

We first study the set of $w\in Z$ such that $\pi_w$ is transitive.

\begin{lemma}\label{transitif}
If the action $H\curvearrowright X$ has infinite orbits then the set $U=\{w\in Z\,:\,\pi_w\,\,\,\text{is transitive}\}$ is a dense $G_{\delta}$ in $Z$.
\end{lemma}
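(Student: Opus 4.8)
The plan is to prove that $U = \{w \in Z : \pi_w \text{ is transitive}\}$ is a dense $G_\delta$ in $Z$ by the standard two-part strategy for such Baire-category arguments: first exhibit $U$ as a countable intersection of open sets, then show each of these open sets is dense. Transitivity of $\pi_w$ means that for every pair $(x,y) \in X \times X$ there exists $\gamma \in \Gamma$ with $\pi_w(\gamma)x = y$. Since $X$ is countable, I would enumerate the pairs and write $U = \bigcap_{(x,y)} U_{x,y}$ where $U_{x,y} = \{w \in Z : \exists \gamma \in \Gamma,\ \pi_w(\gamma)x = y\}$.

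\textbf{The $G_\delta$ part.} For fixed $x,y$ and fixed $\gamma \in \Gamma$, the condition $\pi_w(\gamma)x = y$ is an open condition on $w$: evaluation at a point is continuous on $S(X)$ with the topology of pointwise convergence, and $\gamma$ is a fixed word in $t, t^{-1}$ and elements of $H$, so $w \mapsto \pi_w(\gamma)$ is continuous (composition is continuous on the Polish group $S(X)$). Hence each $\{w : \pi_w(\gamma)x = y\}$ is open (indeed clopen), and $U_{x,y} = \bigcup_{\gamma \in \Gamma} \{w : \pi_w(\gamma)x = y\}$ is open as a countable union of open sets. Therefore $U$ is a $G_\delta$. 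This part is routine.

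\textbf{The density part (the main obstacle).} The hard part will be showing each $U_{x,y}$ is dense in $Z$. Given $w_0 \in Z$, a finite set $F \subset X$, and $x,y \in X$, I must produce $w \in Z$ with $w|_F = w_0|_F$ and some $\gamma \in \Gamma$ sending $x$ to $y$ under $\pi_w$. The natural idea is to connect $x$ to $y$ using the generator $t$, i.e. to modify $w_0$ on a finite set outside $F$ so that the new bijection $w$ maps a point in the $H$-orbit of $x$ to a point in the $H$-orbit of $y$; since $H$ has infinite orbits, I have infinitely many available targets to route through. Concretely, I would choose points $x', y'$ in the orbits $Hx$ and $Hy$ lying outside the finite set $F \cup w_0(F) \cup w_0^{-1}(F)$ and outside the finite $\Sigma$- and $\theta(\Sigma)$-saturations that the constraint $w\sigma w^{-1} = \theta(\sigma)$ forces me to respect, and then perturb $w_0$ by a transposition-like modification sending $x'$ to $y'$. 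The delicate point is that $w$ must remain in $Z$, i.e. satisfy $w\sigma w^{-1} = \theta(\sigma)$ for all $\sigma \in \Sigma$; since the $\Sigma$- and $\theta(\Sigma)$-actions are free and $\Sigma$ is finite, each orbit under these groups has size $|\Sigma|$, so I should modify $w_0$ not just at one point but $\Sigma$-equivariantly across the whole $\theta(\Sigma)$-orbit of $x'$ and $\Sigma$-orbit of $y'$, replacing a block by another block in a way compatible with the intertwining relation. Using the infinitude of orbits I can always find such free blocks disjoint from the finite forbidden region, and the equivariant swap keeps $w \in Z$ while leaving $w|_F = w_0|_F$.

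\textbf{Assembling the conclusion.} Once each $U_{x,y}$ is shown open and dense, the Baire category theorem applied to the closed (hence Polish) subspace $Z$ of $S(X)$ yields that $U = \bigcap_{(x,y)} U_{x,y}$ is a dense $G_\delta$, completing the proof. I expect the verification that the equivariant perturbation stays in $Z$ and agrees with $w_0$ on $F$ to be the only genuinely technical step; everything else is bookkeeping with the pointwise-convergence topology.
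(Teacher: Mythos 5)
You follow essentially the same route as the paper: the same decomposition $U=\bigcap_{x,y}U_{x,y}$, the same openness argument, and density via a $\Sigma$-equivariant block swap of $w_0$ supported outside a finite forbidden set, with the infinitude of the $H$-orbits supplying points $x'\in Hx$, $y'\in Hy$ to route through via the stable letter. The outline is correct, but the swap as you describe it is not yet well defined, and this is precisely the point where the paper has to make an argument. Writing $x'=h_1x$ and $y'=h_0y$, your modification must send the block $\Sigma x'$ onto $\theta(\Sigma)y'$ and, to restore bijectivity, send $\Sigma w_0^{-1}(y')$ onto $\theta(\Sigma)w_0(x')$; this requires $\Sigma x'\cap\Sigma w_0^{-1}(y')=\emptyset$, and nothing in your choice of $x',y'$ (avoidance of a finite set built only from $F$ and $w_0$) excludes the degenerate case $\Sigma x'=\Sigma w_0^{-1}(y')$. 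The paper secures this by proving density only at $w\in Z\setminus U_{x,y}$, which suffices, and noting that if the two blocks met then $\sigma h_1x=w^{-1}h_0y$ for some $\sigma\in\Sigma$, whence $\pi_w(h_0^{-1}t\sigma h_1)x=y$, contradicting $w\notin U_{x,y}$. (Alternatively, you could simply add the finite set $\theta(\Sigma)w_0(x')$ to the set that $y'$ must avoid, using again that $Hy$ is infinite.) A second, minor slip: since $w\sigma=\theta(\sigma)w$ for $w\in Z$, domain-side blocks are $\Sigma$-orbits and range-side blocks are $\theta(\Sigma)$-orbits, so the swap acts on the $\Sigma$-orbit of $x'$ and the $\theta(\Sigma)$-orbit of $y'$, not the other way around as you wrote. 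With these two repairs your argument is complete and coincides with the paper's proof.
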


\begin{proof}
Write $U=\cap_{x,y\in X}U_{x,y}$, where $U_{x,y}=\{w\in Z\,:\,\exists g\in\Gamma,\,\,\pi_w(g)x=y\}$. It suffices to show that $U_{x,y}$ is open and dense in $Z$ for all $x,y\in X$. It is obvious that $U_{x,y}$ is open. Let us show that $U_{x,y}$ is dense. Let $w\in Z\setminus U_{x,y}$ and $F\subset\ X$ a finite subset. It suffices to construct $\gamma\in Z$ and $g\in \Gamma$ such that $\gamma|_F=w|_F$ and $\pi_{\gamma}(g)x=y$. Since the action has infinite orbits, there exists $h_0, h_1\in H$ such that $h_0y\notin w(\Sigma F)$ and $h_1 x\notin\Sigma F$. Observe that $\Sigma h_1x\cap\Sigma w^{-1}h_0y=\emptyset$. Indeed, if we have $\Sigma h_1x=\Sigma w^{-1}h_0y$ then, for some $\sigma\in\Sigma$, we have $\sigma h_1x=w^{-1}h_0y$. Hence, $\pi_w(g)x=y$ with $g=h_0^{-1}t\sigma h_1$, a contradiction. Define $Y=\Sigma h_1x\sqcup\Sigma w^{-1}h_0y$. Then, $F\subset Y^c$ and $w(Y)=\theta(\Sigma)wh_1 x\sqcup\theta(\Sigma) h_0y$. Define a bijection $\gamma\in S(X)$ by $\gamma|_{Y^c}=w|_{Y^c}$ and
$$\gamma(\sigma h_1 x)=\theta(\sigma) h_0y\quad\text{and}\quad\gamma(\sigma w^{-1} h_0y)=\theta(\sigma) wh_1x\quad\text{for all}\,\,\,\sigma\in\Sigma.$$
By construction, $\gamma\in Z$,  $\gamma|_F=w|_F$ and $\pi_{\gamma}(h_0^{-1}th_1) x=y$.
\end{proof}

Next, we give a sufficient condition for the set of $w\in Z$ for which $\pi_w$ is amenable to be a dense $G_{\delta}$.

\begin{lemma}\label{Folner}
If the action $H\curvearrowright X$ admits a F{\o}lner sequence $(C_n)$ such that $|C_n|\rightarrow\infty$ then the set
$$V=\{w\in Z\,:\,\pi_w\,\,\text{is amenable}\}$$
is a dense $G_{\delta}$ in $Z$.
\end{lemma}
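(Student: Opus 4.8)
The plan is to realize $V$ as a dense $G_\delta$ via the Baire category Theorem applied to the Polish space $Z$ (a closed subset of $S(X)$, hence Polish). Write $\Gamma=\bigcup_k F_k$ as an increasing union of finite symmetric subsets containing $1$, and for $k,m\geq 1$ set
\[
V_{k,m}=\{w\in Z:\ \exists\ \text{finite nonempty } C\subset X \text{ with } |\pi_w(g)C\,\Delta\,C|<\tfrac1m|C|\ \text{ for all } g\in F_k\}.
\]
By the F{\o}lner characterisation of amenable actions, $\pi_w$ is amenable iff such a $C$ exists for every $k$ and $m$, so $V=\bigcap_{k,m}V_{k,m}$, and it suffices to show each $V_{k,m}$ is open and dense. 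Throughout I will use that $\pi_w|_H$ is the fixed action $H\curvearrowright X$, independent of $w$; so, after choosing a finite symmetric set $S=H_k\cup\{t^{\pm1}\}$ (where $H_k\subset H$ collects the finitely many $H$-letters occurring in $F_k$) such that every $g\in F_k$ is a product of at most $L$ elements of $S$, and using $|g_1g_2C\,\Delta\,C|\leq|g_1C\,\Delta\,C|+|g_2C\,\Delta\,C|$, the only generator whose F{\o}lner defect depends on $w$ is $t$, i.e. $w$ itself.

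Openness is routine: if $w\in V_{k,m}$ with witness $C$, then each $\pi_w(g)|_C$ for $g\in F_k$ is a composition of finitely many copies of $w^{\pm1}$ and fixed bijections, hence determined by the values of $w$ and $w^{-1}$ on a fixed finite subset of $X$. In the topology of pointwise convergence (for which inversion is continuous on $S(X)$), every $w'\in Z$ close enough to $w$ satisfies $\pi_{w'}(g)|_C=\pi_w(g)|_C$ for all $g\in F_k$, so $C$ remains a witness and $w'\in V_{k,m}$.

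The crux is density. The key structural remark is that a $w\in Z$ is exactly an equivariant bijection between the free $\Sigma$-action and the free $\theta(\Sigma)$-action on $X$; equivalently, since $w(\sigma x)=\theta(\sigma)w(x)$, it is encoded by a bijection $\beta$ from the set of $\Sigma$-orbits onto the set of $\theta(\Sigma)$-orbits together with a free choice of base points. Given $w_0\in Z$, a finite $F\subset X$, and $k,m$, I would fix $\eta=1/(2mL)$ and choose a F{\o}lner set $C=C_n$ with $|C_n|\to\infty$ so large that $|hC\,\Delta\,C|<\eta|C|$ for all $h\in H_k$ and that $C$ is simultaneously nearly invariant under the finite groups $\Sigma$ and $\theta(\Sigma)$, say $\sum_{\sigma\in\Sigma\cup\theta(\Sigma)}|\sigma C\,\Delta\,C|<\eta|C|/4$. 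Then I build a new $w$ by: keeping $w=w_0$ on the $\Sigma$-orbits meeting $F$ (this forces $w|_F=w_0|_F$ and keeps $w\in Z$ there); matching as many as possible of the remaining $\Sigma$-orbits contained in $C$ bijectively to $\theta(\Sigma)$-orbits contained in $C$ and defining $w$ equivariantly on them; and extending $\beta$ by any bijection between the two (infinite) complementary families of orbits. Near-invariance of $C$ makes the numbers of $\Sigma$- and of $\theta(\Sigma)$-orbits inside $C$ each at least $|C|/|\Sigma|-O(\eta|C|)$, so the set of $x\in C$ with $w(x)\notin C$ is confined to the unmatched orbits and the finitely many $F$-orbits; a direct count gives $|wC\,\Delta\,C|=2|wC\setminus C|<\eta|C|$ once $|C|$ dominates $|\Sigma||F|$. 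Via the reduction to $S$ this makes $C$ a $(1/m,F_k)$-F{\o}lner set for $\pi_w$, so $w\in V_{k,m}$ and $w|_F=w_0|_F$.

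The main difficulty is precisely this density step, where three requirements must hold at once: $w$ must stay in $Z$ (the intertwining relation), agree with $w_0$ on $F$, and make $C$ almost $w$-invariant. Passing to the orbit picture disposes of the first two cleanly, and the whole estimate then reduces to showing that the orbit-count mismatch $\big||\{\Sigma\text{-orbits}\subseteq C\}|-|\{\theta(\Sigma)\text{-orbits}\subseteq C\}|\big|$ together with the boundary term is negligible against $|C|$ — which is exactly what $|C_n|\to\infty$ and the finiteness of $\Sigma,\theta(\Sigma)<H$ guarantee. Baire's theorem then yields that $V=\bigcap_{k,m}V_{k,m}$ is a dense $G_\delta$ in $Z$.
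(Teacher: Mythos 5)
Your proof is correct in substance and shares the paper's global strategy (Baire category: realize $V$ as a countable intersection of open sets, then prove density by redefining $w$ on finitely many $\Sigma$-orbits away from $F$ so that a large F{\o}lner set for the $H$-action becomes almost invariant under the new $w$), but your density construction is genuinely different in its mechanics, and in one respect simpler. Where the paper modifies $w_0$ by a block swap --- which forces it to first prove a preliminary Claim producing $\gamma_0\in Z$ with $\gamma_0|_F=w_0|_F$ and $D_N\cap\Sigma\gamma_0^{-1}(D_N)=\emptyset$, merely so that the swap is well defined --- you use the parametrization of $Z$ by bijections from $\Sigma$-orbits onto $\theta(\Sigma)$-orbits (with base-point choices) and build the perturbed element of $Z$ outright: keep $w_0$ on orbits meeting $F$, match orbits inside $C$, extend arbitrarily on the infinite remainder. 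This makes the paper's Claim unnecessary. The price is that matching orbits \emph{contained in} $C$ on both sides obliges you to control two error terms: points of $C$ in orbits meeting both $C$ and $C^c$, and the mismatch between the numbers of full $\Sigma$- and $\theta(\Sigma)$-orbits in $C$. The paper avoids both at once by $\Sigma$-saturating its F{\o}lner set (the set $D_N=\Sigma C_N\setminus(\Sigma F\cup\Sigma w(\Sigma F))$ is exactly $\Sigma$-invariant, so no partial $\Sigma$-orbits exist) and by making $\gamma$ map $D_N$ into $\theta(\Sigma)D_N$ rather than into $D_N$ itself, so the entire error collapses to the single quantity $|D_N\Delta\theta(\Sigma)D_N|<|D_N|/2m$. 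Your counting does close, but two details should be repaired in a final write-up. First, constants: each partial or unmatched orbit contributes up to $|\Sigma|$ points of $C$, so from $\sum_{\sigma\in\Sigma\cup\theta(\Sigma)}|\sigma C\Delta C|<\eta|C|/4$ you only get $|wC\Delta C|=O(|\Sigma|\eta|C|)$, not $<\eta|C|$; you must demand near-invariance at scale $\eta/(4|\Sigma|)$, which the F{\o}lner property of $(C_n)$ supplies just as well since $\Sigma\cup\theta(\Sigma)$ is a finite subset of $H$ and $\eta$ is yours to choose. Second, the matching must avoid the $\theta(\Sigma)$-orbits already consumed as $w_0$-images of the $F$-orbits (at most $|F|$ of them) so that the global orbit bijection stays injective; this extra loss is absorbed because $|C_n|\to\infty$. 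With these adjustments your argument is complete; your reduction to the generators $H_k\cup\{t^{\pm 1}\}$ via subadditivity of $g\mapsto|gC\Delta C|$ is a legitimate substitute for the paper's direct use of the test set $F_m\cup\{w\}$, and your orbit-bijection viewpoint is a clean alternative to the paper's saturation-plus-swap, at the cost of some explicit counting that the paper's trick sidesteps.
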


\begin{proof}
At first we prove the following claim.

\textbf{Claim.} Let $Y,F\subset X$ be finite subsets and $w\in Z$ such that $\theta(\Sigma)Y\cap w(F)=\emptyset$. There exists $\gamma\in Z$ such that $\Sigma Y\cap\Sigma\gamma^{-1} Y=\emptyset$ and $\gamma|_F=w|_F$.

\textit{Proof of the Claim.} Write $\theta(\Sigma) Y=\sqcup_{i=1}^n\theta(\Sigma) y_i$. Since the set $\Sigma Y\cup\Sigma F\cup\Sigma w^{-1}(Y)$ is finite and $\Sigma$-invariant, we can find $n$ disjoint $\Sigma$-orbits $\Sigma z_1,\ldots,\Sigma z_n$ in its complement.

Define $\widetilde{F}=\sqcup_{j=1}^n(\Sigma z_j\sqcup\Sigma w^{-1} y_j)$. Then $F\subset\widetilde{F}^c$ and $w(\widetilde{F})=\sqcup_{j=1}^n(\theta(\Sigma)wz_j\sqcup\theta(\Sigma) y_j)$. By the freeness assumption, we can define a bijection $\gamma\in S(X)$ by $\gamma|_{\widetilde{F}^c}=w|_{\widetilde{F}^c}$ and,
$$\gamma(\sigma z_j)=\theta(\sigma) y_j\quad\text{and}\quad \gamma(\sigma w^{-1}y_j)=\theta(\sigma) wz_j\quad\sigma\in\Sigma,\,\,\,1\leq j\leq n.$$
By construction $\gamma\in Z$ and $\gamma|_F=w|_F$. Moreover, $\Sigma Y\cap\Sigma\gamma^{-1}  Y=\sqcup_{j=1}^n\Sigma Y\cap\Sigma\gamma^{-1}(y_j)=\sqcup_{j}\Sigma Y\cap\Sigma z_j=\emptyset$.\hfill{$\Box$}

\textit{End of the proof of Lemma \ref{Folner}.} Write $H=\cup^{\uparrow} F_m$ where $(F_m)$ is an increasing sequence of finite subsets. Since $\Gamma$ is generated by $H$ and $t$ it follows that $\pi_w$ is amenable if and only if, for all $m\geq 1$ there exists a non-empty finite set $C\subset X$ such that
$${\rm Sup}_{g\in F_m\cup\{w\}}\frac{|gC\Delta C|}{|C|}<\frac{1}{m}.$$
Write $V=\cap_{m\geq 1} V_m$ where
$$V_m=\left\{w\in Z\,:\,\,\exists C\subset X\,\,\text{such that}\,\,0<|C|<\infty\,\,\text{and}\,\,{\rm Sup}_{g\in F_m\cup\{w\}}\frac{|gC\Delta C|}{|C|}<\frac{1}{m}\right\}.$$
It is easy to see that $V_m$ is open. Let us show that $V_m$ is dense. Let $w\in Z$ and $F\subset X$ a finite subset. Observe that, for any F{\o}lner sequence $(C_n)$ and any finite subset $K\subset H$, the sequence $(KC_n)$ is again a F{\o}lner sequence. Also, if $|C_n|\rightarrow\infty$ then, for all  finite subset $F\subset X$, there exists $n_0\in\N$ such that the sequence $(C_n\setminus F)_{n\geq n_0}$ is a F{\o}lner sequence. Hence, up to a shifting of the indices, the sequence $(D_n)$, where $D_n=\Sigma C_n\setminus(\Sigma F\cup\Sigma w(\Sigma F))$ is a $\Sigma$-globally invariant F{\o}lner sequence such that $D_n\cap F=\emptyset$ and $\theta(\Sigma)D_n\cap w(F)=\emptyset$ for all $n\in\N$. Let $N\in\N$ large enough such that
$$|gD_N\Delta D_N|<\frac{|D_N|}{2m|\Sigma|}\quad\text{for all}\quad g\in F_m\cup\theta(\Sigma).$$
By the claim, there exists $\gamma_0\in Z$ such that $\gamma_0|_F=w|_F$ and $D_N\cap\Sigma\gamma_0^{-1}(D_N)=\emptyset$. Write
$$D_N=\sqcup_{i=1}^L\Sigma x_i\quad\text{and}\quad \theta(\Sigma)D_N=\sqcup_{i=1}^K\theta(\Sigma)y_i,$$
where $K\geq L$. Observe that the sets $\Sigma x_i$ and $\Sigma \gamma_0^{-1}y_j$ are pairwise disjoint. Define $Y=\bigsqcup_{i=1}^L\Sigma x_i\sqcup\Sigma \gamma_0^{-1} y_i$. Observe that $F\subset Y^c$ and $\gamma_0(Y)=\bigsqcup_{i=1}^L\theta(\Sigma) \gamma_0x_i\sqcup\theta(\Sigma) y_i$. Define a bijection $\gamma\in S(X)$ by $\gamma|_{Y^c}=\gamma_0|_{Y^c}$ and,
$$\gamma(\sigma x_i)=\theta(\sigma) y_i\quad\text{and}\quad \gamma(\sigma \gamma_0^{-1} y_i)=\theta(\sigma)\gamma_0 x_i\quad \sigma\in\Sigma,\,\, 1\leq i\leq L.$$
By construction, $\gamma\in Z$ and $\gamma|_F=\gamma_0|_F=w|_F$. Observe that
$$|\theta(\Sigma)D_N\Delta\gamma(D_N)|=|\theta(\Sigma)D_N|-|\gamma(D_N)|=|\theta(\Sigma)D_N|-|D_N|=|\theta(\Sigma)D_N\Delta D_N|.$$
Moreover,
$$|D_N\Delta \theta(\Sigma)D_N|\leq\sum_{\sigma\in\Sigma}|D_N\Delta \theta(\sigma)D_N|<\frac{|D_{N}|}{2m}.$$
It follows that
$$|D_N\Delta\gamma(D_N)|\leq|D_N\Delta \theta(\Sigma)D_N|+|\theta(\Sigma)D_N\Delta\gamma(D_N)|<\frac{|D_{N}|}{m}.$$
We also have $|D_N\Delta gD_N|<\frac{|D_N|}{m}$ for all $g\in F_m$ hence, it suffices to define $C=D_N$ to see that $\gamma\in V_m$.
\end{proof}

We can now prove Theorem \ref{THMA}.1. It suffices to prove that, under suitable conditions, the set of $w\in Z$ such that $\pi_{w}$ is faithful is a dense $G_{\delta}$ in $Z$. It is obvious that such a set is a $G_{\delta}$. The difficulty will be to prove that it is dense. The idea is very simple: we will start from a faithful action of $\Gamma$ on $Y$ and consider the faithful action of $H$ obtained by restricting the faithful action of $\Gamma$ on infinitely many copies of $Y$. For this action of $H$, any $w\in Z$ can be approximate by a $\gamma\in Z$ such that $\pi_{\gamma}$ is faithful by taking roughly $\gamma$ equals to $w$ on sufficiently many (but finitely) copies of $Y$ and $\gamma$ equals to the original stable letter $t\in\Gamma$ on the remaining copies of $Y$ to insure that $\pi_{\gamma}$ is faithful. Let us write this argument precisely.

\textit{Proof of Theorem \ref{THMA}.1.} Suppose that $H$ admits an amenable and faithful action with infinite orbits and free on $\Sigma$ and $\theta(\Sigma)$. Define $\Gamma={\rm HNN}(H,\Sigma,\theta)=\langle H,t\rangle$. By Lemma \ref{induction} and example \ref{example}, there exists a faithful action $\Gamma\curvearrowright Y$ with infinite $H$-orbits such that $\Sigma,\theta(\Sigma)\curvearrowright Y$ are free and the action $H\curvearrowright Y$ is amenable. Consider the faithful action $\Gamma\curvearrowright X$ with $X=Y\times\N$ given by $g(y,n)=(gy,n)$ for $g\in\Gamma$ and $(y,n)\in X$. We view $H<\Gamma<S(X)$. It is obvious that the actions $\Sigma,\theta(\Sigma)\curvearrowright X$ are free, the $H$-orbits are infinite and the action $H\curvearrowright X$ is amenable. Hence, by Lemma \ref{InfiniteFolner}, there exists a F{\o}lner sequence $(C_n)$ for the $H$-action whose size goes to infinity (one could also use Lemma \ref{LemSupp}). Thus, we can apply Lemmas \ref{transitif} and \ref{Folner} to the action $H\curvearrowright X$. Hence, it suffices to show that the set $O=\{w\in Z\,:\,\pi_w\,\,\text{is faithful}\}$ is a dense $G_{\delta}$ in $Z$. Writing $O=\cap_{g\in\Gamma\setminus\{1\}}O_g$, where $O_g=\{w\in Z\,:\,\pi_w(g)\neq\id\}$ is obviously open, it suffices to show that $O$ is dense. Write $X=\cup^{\uparrow}X_n$ where $X_n=\{(x,k)\in X\,:\,k\leq n\}$ is infinite and globally invariant under $\Gamma$ for the original action. Let $w\in Z$ and $F\subset X$ a finite subset. Let $N\in\N$ large enough such that $\Sigma F\cup w(\Sigma F)\subset X_N$. The set $X_N\setminus\Sigma F$ (resp. $X_N\setminus w(\Sigma F)$) is infinite and globally invariant under $\Sigma$ (resp. $\theta(\Sigma)$). Hence, there exists a bijection $\gamma_0\,:\,X_N\setminus\Sigma F\rightarrow X_N\setminus w(\Sigma F)$ satisfying $\gamma_0\sigma=\theta(\sigma)\gamma_0$ for all $\sigma\in\Sigma$. Define $\gamma\in S(X)$ by $\gamma|_{\Sigma F}=w|_{\Sigma F}$, $\gamma|_{X_N\setminus\Sigma F}=\gamma_0$ and $\gamma|_{X_N^c}=t|_{X_N^c}$. By construction, $\gamma\in Z$ and $\gamma|_F=w|_F$. Moreover, since $\pi_{\gamma}(g)(y,n)=(gy,n)$ for all $n>N$ and since $\Gamma\curvearrowright Y$ is faithful, it follows that $\pi_{\gamma}$ is faithful.\hfill{$\Box$}

\begin{remark}\label{rmkhnn}
The following more general result is actually true.

\textit{For all amenable and faithful action on a countable set $H\curvearrowright Y$ with infinite orbits and free on $\Sigma$ and $\theta(\Sigma)$, there exists an amenable, transitive and faithful action on a countable set $\Gamma\curvearrowright X$ such that, for all $h\in H$, ${\rm Fix}_Y(h)=\emptyset$ implies ${\rm Fix}_X(h)=\emptyset$.}

Indeed, it follows from Lemma \ref{induction} and Example \ref{example}, that the replacement by the induced action preserves the property that elements in $H$ have an empty fixed point set. Also, the replacement by the action on $X=Y\times\N$ preserves this property. Since $\pi_w(h)=h$ for all $h\in H$ and all $w\in Z$, this proves the remark.
\end{remark}

\section{Amalgamated free products in the class $\mathcal{A}$}

In this section we prove Theorem \ref{THMA}.2. The notations are independent of the ones of section $3$.

Let $X$ be an infinite countable set. For the rest of this section we assume that $\Gamma_1,\Gamma_2< S(X)$ are two countable subgroups of the Polish group of bijections of $X$ with a common finite subgroup $\Sigma$ such that $\Sigma\curvearrowright X$ is free. Define
$$Z=\{w\in S(X)\,:\,w\sigma=\sigma w\,\,\,\text{for all}\,\,\sigma\in\Sigma\}.$$
$Z$ is a non-trivial closed subgroup of $S(X)$. Let $\Gamma=\Gamma_1*_{\Sigma}\Gamma_2$. By the universal property, for all $w\in Z$, there exists a unique group homorphism $\pi_w\,:\,\Gamma\rightarrow S(X)$ such that $\pi_w(g)=g$ and $\pi_w(h)=w^{-1}hw$ for all $g\in\Gamma_1,h\in\Gamma_2$.

\begin{lemma}\label{lemtransfree}
If the actions $\Gamma_1\curvearrowright X$ and $\Gamma_2\curvearrowright X$ have infinite orbits then the set

$$U=\{w\in Z\,:\,\pi_w\,\,\text{is transitive}\,\}$$
is a dense $G_{\delta}$ in $Z$.
\end{lemma}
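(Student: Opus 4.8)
The plan is to follow the pattern of Lemma~\ref{transitif}. Write $U=\bigcap_{x,y\in X}U_{x,y}$, where $U_{x,y}=\{w\in Z:\exists\,g\in\Gamma,\ \pi_w(g)x=y\}$; transitivity of $\pi_w$ is precisely the condition $w\in U_{x,y}$ for every ordered pair $(x,y)$, and since $X$ is countable this is a countable intersection. Each $U_{x,y}$ is open: for a fixed word $g$ the value $\pi_w(g)x$ depends continuously on $w$, since it is obtained by applying finitely many copies of $w$, $w^{-1}$ and fixed elements of $\Gamma_1\cup\Gamma_2$ to $x$, and in $S(X)$ the evaluation maps $w\mapsto w(\cdot)$ and $w\mapsto w^{-1}(\cdot)$ are continuous. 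Thus $\{w:\pi_w(g)x=y\}$ is clopen and $U_{x,y}$ is a union of such sets. As $Z$ is a closed subgroup of the Polish group $S(X)$, it is itself Polish, so by Baire's theorem it suffices to prove that each $U_{x,y}$ is dense.

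For density, fix $x,y\in X$, $w\in Z$, and a finite set $F\subset X$; I must produce $\gamma\in Z$ with $\gamma|_F=w|_F$ and some $g\in\Gamma$ with $\pi_\gamma(g)x=y$. The mechanism is that $\pi_\gamma$ fixes $\Gamma_1$ and sends $\Gamma_2$ to $\gamma^{-1}\Gamma_2\gamma$, so I will route $x$ to $y$ by a word $g=g_2^{-1}h_0g_1$ with $g_1,g_2\in\Gamma_1$ and $h_0\in\Gamma_2$, for which $\pi_\gamma(g)=g_2^{-1}(\gamma^{-1}h_0\gamma)g_1$. First, using that $\Gamma_1\curvearrowright X$ has infinite orbits (and $\Sigma$ is finite and acts freely), I choose $g_1,g_2\in\Gamma_1$ so that $a:=g_1x$ and $b:=g_2y$ lie in fresh $\Sigma$-orbits, i.e.\ $\Sigma a,\Sigma b$ are disjoint from each other and from $F$. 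Next, using that $\Gamma_2\curvearrowright X$ has infinite orbits, I choose $h_0\in\Gamma_2$ so that $q:=h_0^{-1}w(b)$ lies in a $\Sigma$-orbit disjoint from $w(F)\cup\Sigma w(a)\cup\Sigma w(b)$; equivalently, $\Sigma w^{-1}q$ is disjoint from $F\cup\Sigma a\cup\Sigma b$.

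I then modify $w$ by the $\Sigma$-equivariant ``swap'' exchanging the images of the two orbits $\Sigma a$ and $\Sigma w^{-1}q$: set $\gamma(\sigma a)=\sigma q$ and $\gamma(\sigma w^{-1}q)=\sigma w(a)$ for $\sigma\in\Sigma$, and $\gamma=w$ on the complement of $\Sigma a\cup\Sigma w^{-1}q$. Freeness of $\Sigma\curvearrowright X$ makes this well defined and $\Sigma$-equivariant, so $\gamma\in Z$; since $\gamma$ and $w$ permute exactly the same sets of points, $\gamma$ is a bijection; and $\gamma|_F=w|_F$ because $F$ avoids the two modified orbits. By construction $\gamma(a)=q$, while $\gamma(b)=w(b)$ since $b$ was kept out of the modified orbits, so
$$\pi_\gamma(g)x=g_2^{-1}\gamma^{-1}h_0\gamma\,a=g_2^{-1}\gamma^{-1}h_0q=g_2^{-1}\gamma^{-1}w(b)=g_2^{-1}b=y,$$
whence $\gamma\in U_{x,y}$, proving density.

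The routine content is the bookkeeping that the finitely many $\Sigma$-orbits involved can be made disjoint by the infinite-orbit hypotheses. The one genuine point, which I expect to be the crux, is the design of the connecting word $g=g_2^{-1}h_0g_1$ together with a single swap realizing simultaneously $\gamma(a)=q$ and $\gamma(b)=w(b)$: this is the analogue of the single stable-letter link used in Lemma~\ref{transitif}, now split into one factor $\gamma$ and one factor $\gamma^{-1}$ because in the amalgamated case $w$ enters only through the conjugation $\gamma^{-1}\Gamma_2\gamma$ rather than as a move in its own right. Everything else is Baire's theorem applied to the dense open sets $U_{x,y}$.
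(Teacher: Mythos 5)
Your proof is correct and takes essentially the same route as the paper: the same decomposition $U=\bigcap_{x,y}U_{x,y}$ into open sets, Baire's theorem, and density via a $\Sigma$-equivariant modification of $w$ on finitely many $\Sigma$-orbits, with $x$ routed to $y$ by a word $\Gamma_1$--$\Gamma_2$--$\Gamma_1$. The only cosmetic difference is that the paper picks a fresh $\Gamma_2$-connected pair $z$, $t=hz$ and swaps four $\Sigma$-orbits (redirecting both $g_1x$ and $g_2^{-1}y$), whereas you choose $h_0$ adapted to $w(b)$ so that a single two-orbit swap at $a$ suffices; this is the same mechanism, slightly more economically bookkept.
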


\textit{Proof.} Write $U=\cap_{x,y\in X} U_{x,y}$, where $U_{x,y}=\{w\in Z\,:\,\,\text{there exists}\,\,g\in\Gamma\,\,\text{such that}\,\,\pi_w(g)x=y\}$. Since $U_{x,y}$ is open in $Z$ for all $x,y\in X$, it suffices to show that it is dense in $Z$. Let $x,y\in X$, $w\in Z$ and $F\subset X$ a finite subset. Since $\Gamma_1\curvearrowright X$ has infinite orbits, there exists $g_1\in\Gamma_1$ such that $g_1x\notin\Sigma F$. By the same argument, there exists also $g_2\in\Gamma_1$ such that $g_2^{-1}y\notin\Sigma F\cup\Sigma g_1x$. Take $z,t\in X$ in the same $\Gamma_2$-orbit and in the complement of the finite set $w(\Sigma F\cup\Sigma g_1 x\cup\Sigma g_2^{-1} y)$ such that $\Sigma z\cap\Sigma t=\emptyset$. Write $t=hz$ where $h\in\Gamma_2$ and define 
$$Y=\Sigma g_1 x\sqcup\Sigma g_2^{-1}y\sqcup\Sigma w^{-1}z\sqcup\Sigma w^{-1}t.$$
One has $F\subset Y^c$ and $w(Y)=\Sigma wg_1 x\sqcup\Sigma wg_2^{-1}y\sqcup\Sigma z\sqcup\Sigma t$. Define $\gamma\in S(X)$ by $\gamma|_{Y^c}=w|_{Y^c}$ and,
$$\gamma(\sigma g_1x)=\sigma z,\,\,\,\gamma(\sigma g_2^{-1}y)=\sigma t,\,\,\,\gamma(\sigma w^{-1}z)=\sigma wg_1 x,\,\,\,\gamma(\sigma w^{-1}t)=\sigma wg_2^{-1} y\quad\forall\sigma\in\Sigma.$$
By construction, $\gamma\in Z$ and $\gamma|_F=w|_F$. Moreover, with $g=g_2hg_1\in\Gamma$, one has
$$\pi_{\gamma}(g)x=g_2\gamma^{-1}h\gamma g_1x=g_2\gamma^{-1}hz=g_2\gamma^{-1}t=g_2g_2^{-1}y=y.\hfill{\Box}$$

\begin{lemma}\label{LemFolFree}
If there exist F{\o}lner sequences $(C_n)$ and $(D_n)$ for the actions $\Gamma_1\curvearrowright X$ and $\Gamma_2\curvearrowright X$ respectively such that $|C_n|,|D_n|\rightarrow\infty$ and $\frac{|D_n|}{|C_n|}\rightarrow 1$ then the set $V=\{w\in Z\,:\,\pi_w\,\,\text{is amenable}\,\,\}$ is a dense $G_{\delta}$ in $Z$.
\end{lemma}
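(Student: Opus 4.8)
The plan is to follow the Baire-category scheme of Lemma \ref{Folner}, adapted to the two F{\o}lner sequences. Write $\Gamma_1=\cup^{\uparrow}F^1_m$ and $\Gamma_2=\cup^{\uparrow}F^2_m$ as increasing unions of finite sets and set $F_m=F^1_m\cup F^2_m$. Since $\Gamma$ is generated by $\Gamma_1\cup\Gamma_2$, the homomorphism $\pi_w$ is amenable if and only if for every $m$ there is a non-empty finite $C\subset X$ with $\sup_{g\in F_m}|\pi_w(g)C\Delta C|/|C|<1/m$. Now $\pi_w(g)=g$ for $g\in\Gamma_1$, whereas for $h\in\Gamma_2$ one has $\pi_w(h)=w^{-1}hw$, so that $|\pi_w(h)C\Delta C|=|h(wC)\Delta(wC)|$ after applying the bijection $w$. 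Thus the condition on $C$ splits into: $C$ is $(1/m,F^1_m)$-F{\o}lner for $\Gamma_1\curvearrowright X$ \emph{and} $wC$ is $(1/m,F^2_m)$-F{\o}lner for $\Gamma_2\curvearrowright X$. Writing $V=\cap_m V_m$ with $V_m$ the corresponding set of $w$, each $V_m$ is open (for a fixed $C$ the defining inequality depends only on $w|_C$), so it remains to prove that each $V_m$ is dense.

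Fix $w\in Z$ and a finite set $F\subset X$. I would produce $\gamma\in Z$ with $\gamma|_F=w|_F$ together with a single finite set $C$ that is F{\o}lner for $\Gamma_1$ and whose image $\gamma C$ is F{\o}lner for $\Gamma_2$; this forces $\gamma\in V_m$. Choose a large common index $n$ and put $C=\Sigma C_n$ and $D=\Sigma D_n$. Because $\Sigma<\Gamma_i$ is finite and $(C_n),(D_n)$ are F{\o}lner, one has $|\Sigma C_n|/|C_n|\recht 1$ and $|\Sigma D_n|/|D_n|\recht 1$; combined with $|D_n|/|C_n|\recht 1$ this yields $|D|/|C|\recht 1$, and it is exactly here that the size hypothesis is used. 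Discarding a fixed finite set as in Lemma \ref{Folner} we may assume $C,D$ avoid $F$ and $w(F)$ while keeping good F{\o}lner constants. Both sets are $\Sigma$-invariant, and since $\Sigma$ acts freely each is a disjoint union of $\Sigma$-orbits, say $L=|C|/|\Sigma|$ and $M=|D|/|\Sigma|$ of them.

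The mechanism is then a $\Sigma$-equivariant swap carrying the $\Sigma$-orbits of $C$ onto those of $D$, exactly as in Lemmas \ref{lemtransfree} and \ref{Folner}. The one obstruction is that $wC$ and $D$ need not be disjoint, which is needed to glue the swap to $w$ off a finite set. As in the Claim inside Lemma \ref{Folner}, I would first find $\gamma_0\in Z$ with $\gamma_0|_F=w|_F$ and $\gamma_0(C)\cap D=\emptyset$, by re-routing the orbits $\Sigma x_i\subset C$ onto fresh $\Sigma$-orbits picked in the infinite complement of $D\cup w(F)\cup w(C)$ through a finite swap. Then I define $\gamma$ from $\gamma_0$ by a second finite swap sending the $L$ orbits of $C$ onto $\min(L,M)$ orbits of $D$ (matching as many as the smaller count allows), commuting with $\Sigma$, equal to $\gamma_0$ outside the finite swap-support $Y$, and arranged so that $\gamma(Y)=\gamma_0(Y)$; this makes $\gamma$ a genuine bijection with $\gamma\in Z$ and $\gamma|_F=w|_F$.

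Finally I would verify $\gamma\in V_m$ using $C$ itself: $C$ is F{\o}lner for $\Gamma_1$ by construction, while $\gamma C$ agrees with $D$ except on the unmatched orbits, so $|\gamma C\Delta D|=\big||C|-|D|\big|$ and hence $|\gamma C\Delta D|/|\gamma C|=\big||D|/|C|-1\big|\recht 0$. Since $D$ is F{\o}lner for $\Gamma_2$, the triangle-inequality estimate of Lemma \ref{largediag} shows $\gamma C$ is $(1/m,F^2_m)$-F{\o}lner for $\Gamma_2$ once $n$ is large. Thus $C$ witnesses $\gamma\in V_m$, proving density. The main difficulty is the disjointness/swap bookkeeping together with controlling $|D|/|C|$ after passing to $\Sigma$-invariant sets; once this is set up the Baire category argument closes the proof as in the previous lemmas.
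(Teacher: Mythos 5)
Your proposal is correct and follows essentially the same route as the paper's proof: the same decomposition $V=\cap_m V_m$ into open sets, the same preprocessing (passing to $\Sigma$-invariant Følner sets avoiding $F$ and $w(F)$, then a preliminary $\Sigma$-equivariant swap $\gamma_0$ achieving $\gamma_0(C)\cap D=\emptyset$, which is the paper's Claim), the same orbit-matching swap of $\min(L,M)$ orbits, and the same final triangle-inequality estimate using $|\gamma(C)\Delta D|=\bigl|\,|C|-|D|\,\bigr|$ together with the hypothesis $|D_n|/|C_n|\rightarrow 1$. The paper merely makes the quantitative bookkeeping explicit (the $1/(4m)$ thresholds), which your sketch leaves implicit but would carry out identically.
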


\begin{proof}
We start the proof with the following simple claim.

\textbf{Claim.}\begin{enumerate}
\item For all finite subsets $Y_1,Y_2\subset X$ there exists  $\Sigma$-invariant F{\o}lner sequences $(C'_n)$ and $(D'_n)$ for the actions $\Gamma_1\curvearrowright X$ and $\Gamma_2\curvearrowright X$ respectively such that $|C'_n|,|D'_n|\rightarrow\infty$, $\frac{|D'_n|}{|C'_n|}\rightarrow 1$ and $C'_n\cap Y_1=\emptyset$, $D'_n\cap Y_2=\emptyset$ for all $n\in\N$.
\item Let $F,Y_1\subset X$ be finite subsets such that $\Sigma Y_1\cap F=\emptyset$. For all finite subset $Y_2\subset X$ and all $w\in Z$, there exists $\gamma\in Z$ such that $\gamma|_F=w|_F$ and  $ \Sigma Y_1\cap\gamma^{-1}( \Sigma Y_2)=\emptyset$.
\end{enumerate}
\textit{Proof of the claim.}
$1$. Take $C_n'=\Sigma C_n\setminus\Sigma Y_1$ and $D_n'=\Sigma D_n\setminus \Sigma Y_2$ and shift the indices if necessary. We leave the details to the reader. 

$2$. Write $\Sigma Y_1=\sqcup_{i=1}^l\Sigma x_i$. Since the set $\Sigma wY_1\cup\Sigma wF\cup\Sigma Y_2$ is finite and $\Sigma$ invariant, we can find $l$ disjoint $\Sigma$-orbits $\Sigma z_1,\ldots \Sigma z_l$ in its complement. Define $Y=\sqcup_{i=1}^l\Sigma x_i\sqcup\Sigma w^{-1} z_i$. Then, $F\subset Y^c$ and $w(Y)=\sqcup_{i=1}^l\Sigma wx_i\sqcup\Sigma  z_i$. Define $\gamma\in S(X)$ by $\gamma|_{Y^c}=w|_{Y^c}$ and,
$$\gamma(\sigma x_i)=\sigma z_i\quad\gamma(\sigma w^{-1} z_i)=\sigma w x_i\quad\text{for all}\,\,\,\sigma\in\Sigma,\,\,1\leq i\leq l.$$
By construction $\gamma\in Z$ and $\gamma|_{F}=w|_{F}$. Moreover, $\gamma(\Sigma Y_1)\cap\Sigma Y_2=\emptyset$.\hfill{$\Box$}

\textit{End of the proof of Lemma \ref{LemFolFree}.} Write $\Gamma_1=\cup^{\uparrow}F_m$ and $\Gamma_2=\cup^{\uparrow}G_m$, where $|F_m|, |G_m|<\infty$. Since $\Gamma$ is generated by $\Gamma_1$ and $\Gamma_2$ it follows that $\pi_w$ is amenable if and only if, for all $m\geq 1$ there exists a non-empty finite set $C\subset X$ such that 
$$\frac{|\pi_w(g)C\Delta C|}{|C|}=\frac{|gC\Delta C|}{|C|}<\frac{1}{m}\,\,\,\,\forall g\in F_m\quad\text{and}\quad
\frac{|\pi_w(h)C\Delta C|}{|C|}=\frac{|h w( C ) \Delta w ( C ) |}{|C|}<\frac{1}{m}\,\,\,\,\forall h\in G_m.$$
Write $V=\cap_{m\geq 1} V_m$ where
$$V_m=\{w\in Z\,:\,\exists C\subset X,\,0<|C|<\infty,\,\,\text{such that}\,\,\frac{|gC\Delta C|}{|C|}<\frac{1}{m}\,\,\text{and}\,\,\frac{|h w( C ) \Delta w ( C ) |}{|C|}<\frac{1}{m}\,\,\forall g\in F_m, h\in G_m\}.$$
Since $V_m$ is open in $Z$, it suffices to show that $V_m$ is dense in $Z$. Let $w\in Z$ and $F\subset X$ a finite subset. By the first assertion of the claim, we can assume that $(C_n)$ and $(D_n)$ are $\Sigma$-invariant F{\o}lner sequences such that $C_n\cap F=\emptyset$ and $D_n\cap w(F)=\emptyset$ for all $n\in\N$. Let $N\in\N$ large enough such that
$$\frac{|gC_N\Delta C_N|}{|C_N|}<\frac{1}{m},\quad\frac{|hD_N\Delta D_N|}{|D_N|}<\frac{1}{4m},\quad\left|1-\frac{|D_N|}{|C_N|}\right|<\frac{1}{4m}\quad\text{for all}\,\,g\in F_m,\,\,h\in G_m.$$

By the second assertion of the claim we may and will assume that $C_N\cap w^{-1}(D_N)=\emptyset$. Write $C_N=\sqcup_{i=1}^l\Sigma x_i$ and $D_N=\sqcup_{j=1}^k\Sigma y_j$. Let $M={\rm Min}(l,k)$ and define $Y=\sqcup_{i=1}^M\Sigma x_i\sqcup\Sigma w^{-1}y_i$. Then, one has $F\subset Y^c$ and $w(Y)=  \sqcup_{i=1}^M\Sigma wx_i\sqcup\Sigma y_i$. Define $\gamma\in S(X)$ by $\gamma|_{Y^c}=w|_{Y^c}$ and,
$$\gamma(\sigma x_i)=\sigma y_i\quad\gamma(\sigma w^{-1}y_i)=\sigma wx_i\quad\text{for all}\,\,\sigma\in\Sigma,\,\,1\leq i\leq M.$$
By construction $\gamma\in Z$ and $\gamma|_F=w|_F$. Moreover, $|\gamma(C_N)\Delta D_N|=\left| \,|D_N|-|C_N| \,\right|<\frac{|C_N|}{4m}$. Hence, for all $h\in G_m$, one has,
\begin{eqnarray*}
|h\gamma(C_N)\Delta\gamma(C_N)|&\leq& |h\gamma(C_N)\Delta hD_N|+|hD_N\Delta D_N|+|D_N\Delta \gamma(C_N)|
=2|\gamma(C_N)\Delta D_N|+|hD_N\Delta D_N|\\
&<&\frac{|C_N|}{2m}+\left(1+\frac{1}{4m}\right)\frac{|hD_N\Delta D_N|}{|D_N|}|C_N|<\frac{|C_N|}{m}.
\end{eqnarray*}                                                                                                            
Defining $C=C_N$, we see that $\gamma\in V_m$.
\end{proof}

\textit{Proof of Theorem \ref{THMA}.2.}
Suppose that the triple $(\Sigma,\Gamma_1,\Gamma_2)$ satisfies the hypothesis of Theorem \ref{THMA}.2 and define $\Gamma=\Gamma_1*_{\Sigma}\Gamma_2$. By Lemma \ref{induction}, for all $i\in\{1,2\}$, there exists a faithful action $\Gamma\curvearrowright Y_i$ with infinite $\Gamma_i$-orbits such that $\Sigma\curvearrowright Y_i$ is free and the action $\Gamma_i\curvearrowright Y_i$ is amenable. Moreover, by Example \ref{example}, $\Gamma\curvearrowright Y_i$ also has infinite $\Gamma_j$-orbits for $j\neq i$. Define $Y=Y_1\sqcup Y_2$. Then the natural faithful action $\Gamma\curvearrowright Y$ has infinite $\Gamma_i$-orbits for $i=1,2$, $\Sigma\curvearrowright Y$ is free and $\Gamma_i\curvearrowright Y$ is amenable for $i=1,2$. Define $X=Y\times\N$ with the faithful $\Gamma$-action given by $g(y,n)=(gy,n)$ for $g\in\Gamma$ and $(y,n)\in X$. View $\Sigma<\Gamma_1,\Gamma_2<\Gamma<S(X)$. It is clear that $\Sigma$ acts freely on $X$. Moreover, by Lemma \ref{LemSupp}, we can find F{\o}lner sequences $(C_n)$ and $(D_n)$ for the actions of $\Gamma_1$ and $\Gamma_2$ on $X$ respectively such that $|C_n|\rightarrow\infty$, $|D_n|\rightarrow\infty$ and $\frac{|D_n|}{|C_n|}\rightarrow 1$. Thus, we can apply Lemmas \ref{lemtransfree} and \ref{LemFolFree} to the actions $\Gamma_1,\Gamma_2\curvearrowright X$. Hence, it suffices to show that the set $O=\{w\in Z\,:\,\pi_w\,\,\,\text{is  faithful}\}$ is a dense $G_{\delta}$ in $Z$. As in the proof of Theorem \ref{THMA}.1 it is easy to write $O$ as a countable intersection of open sets. Hence, it suffices to show that $O$ is dense in $Z$. Let $w\in Z$ and $F\subset X$ a finite subset. Write $X=\cup^{\uparrow}X_n$ where $X_n=\{(x,k)\in X\,:\,k\leq n\}$ is infinite globally invariant under $\Gamma$. Let $N\in\N$ large enough such that $\Sigma F\cup w(\Sigma F)\subset X_N$. Since the sets $X_N\setminus\Sigma F$ and $X_N\setminus w(\Sigma F)$ are infinite and $\Sigma$-invariant, there exists a bijection $\gamma_0\,:\,X_N\setminus\Sigma F\rightarrow X_N\setminus w(\Sigma F)$ such that $\gamma_0\sigma=\sigma\gamma_0$ for all $\sigma\in\Sigma$. Define $\gamma\in S(X)$ by $\gamma|_{\Sigma F}=w|_{\Sigma F}$, $\gamma|_{X_N\setminus\Sigma F}=\gamma_0$ and $\gamma|_{X_N^c}=\id|_{X_N^c}$. By construction, $\gamma\in Z$ and $\gamma|_{F}=w|_{ F}$. Moreover, since $\pi_{\gamma}(g)(y,n)=(gy,n)$ for all $g\in\Gamma$ and all $(y,n)\in X$ with $n>N$ and because $\Gamma\curvearrowright Y$ is faithful, it follows that $\pi_{\gamma}$ is faithful.\hfill{$\Box$}

\begin{remark}\label{rmkfree}
The following more general result is actually true.

\textit{If, for $i=1,2$, there exists an amenable and faithful action on a countable set $\Gamma_i\curvearrowright Y_i$ with infinite orbits and free on $\Sigma$ then, there exists an amenable, transitive and faithful action on a countable set $\Gamma\curvearrowright X$ with the property that, for all $i=1,2$, for all $h\in\Gamma_i$, ${\rm Fix}_{Y_i}(h)=\emptyset$ implies ${\rm Fix}_X(h)=\emptyset$.}

Indeed, the first replacement by the induced action from $\Gamma_i$ to $\Gamma$ preserves the property that the elements in $\Gamma_i$ have an empty fixed point set. Moreover, Lemma \ref{induction} and example \ref{example} imply that $\Gamma_i\curvearrowright Y_j$ is free for $i\neq j$. Hence, the property to have an empty fixed point set for the actions $\Gamma_1,\Gamma_2\curvearrowright Y=Y_1\sqcup Y_2$ is preserved. The replacement by the action on $X=Y\times\N$ also preserves this property. Since, for all $w\in Z$, $\pi_w(g)=g$ for all $g\in\Gamma_1$ and ${\rm Fix}_X(\pi_w(h))=w^{-1}({\rm Fix}_X(h))$ for all $h\in\Gamma_2$, this proves the remark.
\end{remark}


\section{Groups acting on trees in the class $\mathcal{A}$}


This section contains the proof of Theorem \ref{THMB}. Let $\Gr$ be a graph. We denote by $\EG$ its edge set and by $\VG$ its vertex set. For $e\in\EG$ we denote by $s(e)$ the source of $e$ and $r(e)$ the range of $e$.

Let $\Gamma$ be a countable group acting without inversion on a non-trivial tree ${\rm T}$ with finite quotient graph $\mathcal{G}={\rm T}/\Gamma$ and finite edge stabilizers. By \cite{Se83}, the quotient graph $\mathcal{G}$ can be equipped with the structure of a graph of groups $(\mathcal{G},\{\Gamma_p\}_{p\in\VG},\{\Sigma_e\}_{e\in\EG})$ where each $\Sigma_e$ is isomorphic to an edge stabilizer and each $\Gamma_p$ is isomorphic to a vertex stabilizer and such that $\Gamma$ is the fundamental group of this graph of groups i.e., given a fixed maximal subtree $\mathcal{T}\subset\mathcal{G}$, $\Gamma$ is generated by the groups $\Gamma_p$ for $p\in\VG$ and the edges $e\in\EG$ with the relations
$$\overline{e}=e^{-1},\quad s_{e}(x)=er_{e}(x)e^{-1}\,\,,\,\forall x\in\Sigma_e\quad\text{and}\quad e=1\,\,\,\,\forall e\in {\rm E}(\mathcal{T}),$$
where $s_e\,:\,\Sigma_e\rightarrow \Gamma_{s(e)}$ and $r_e\,:\,\Sigma_e\rightarrow\Gamma_{r(e)}$ are respectively the source and range group homomorphisms.  We will prove the following stronger version of Theorem \ref{THMB} by induction on $n=\frac{1}{2}|E(\mathcal{G})|\geq 1$.

\begin{theorem}\label{THMB2}
Suppose that, for all $p\in\VG$, there exists an amenable and faithful action on a countable set $\Gamma_p\curvearrowright X_p$ with infinite orbits and free on $s_e(\Sigma_e)$ for all $e\in\EG$ such that $s(e)=p$. Then, there exists an amenable, faithful and transitive action on a countable set $\Gamma\curvearrowright X$ such that, for all $p\in\VG$ and all $h\in\Gamma_p$, ${\rm Fix}_{X_p}(h)=\emptyset$ implies  ${\rm Fix}_{X}(h)=\emptyset$.
\end{theorem}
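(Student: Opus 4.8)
The plan is to prove Theorem \ref{THMB2} by induction on $n=\frac{1}{2}|E(\mathcal{G})|$, using the two base-case results of Section 3 and Section 4 (more precisely their strengthened versions, Remarks \ref{rmkhnn} and \ref{rmkfree}) as the engine. When $n=1$ the quotient graph has a single geometric edge, so either that edge is a loop and $\Gamma$ is an HNN-extension $\HNN(\Gamma_p,\Sigma_e,\theta)$, or the edge joins two distinct vertices and $\Gamma=\Gamma_p*_{\Sigma_e}\Gamma_q$ is an amalgamated free product. In the loop case the hypothesis gives an amenable, faithful action of $\Gamma_p$ on $X_p$ with infinite orbits, free on both $s_e(\Sigma_e)$ and $r_e(\Sigma_e)=\theta(\Sigma_e)$, so Remark \ref{rmkhnn} applies verbatim and produces the desired action of $\Gamma$ preserving emptiness of fixed-point sets for elements of $\Gamma_p$. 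In the amalgam case Remark \ref{rmkfree} applies identically. Thus the base case is exactly Theorem \ref{THMA} in its strengthened form.

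For the inductive step I would reduce the size of the graph by one geometric edge. Fix a geometric edge $\{e,\overline{e}\}$ of $\mathcal{G}$ and distinguish two cases according to whether removing it disconnects $\mathcal{G}$ or not. If $\{e,\overline{e}\}$ lies on a cycle (equivalently $e\notin E(\mathcal{T})$ for some choice of maximal subtree, or removing it keeps $\mathcal{G}$ connected), then collapsing the remaining graph of groups exhibits $\Gamma$ as an HNN-extension $\HNN(\Lambda,\Sigma_e,\theta)$, where $\Lambda$ is the fundamental group of the graph of groups on $\mathcal{G}\setminus\{e,\overline{e}\}$, which has $n-1$ geometric edges. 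If instead $\{e,\overline{e}\}$ is a bridge, then cutting it splits $\mathcal{G}$ into two graphs of groups with fundamental groups $\Lambda_1,\Lambda_2$ and exhibits $\Gamma=\Lambda_1*_{\Sigma_e}\Lambda_2$, where the total number of geometric edges in the two pieces is $n-1$. In either case the induction hypothesis, applied to $\Lambda$ (resp.\ to $\Lambda_1$ and $\Lambda_2$), yields an amenable, faithful, transitive action with infinite orbits on a countable set, preserving emptiness of fixed-point sets for the vertex-group elements sitting inside it. Then one more application of Remark \ref{rmkhnn} (resp.\ Remark \ref{rmkfree}) passes from $\Lambda$ (resp.\ $\Lambda_1,\Lambda_2$) to $\Gamma$.

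The crucial point that makes the machinery applicable at each step is the freeness hypothesis on $\Sigma_e$: to invoke the base-case remarks I need the edge group $\Sigma_e$ (via its source and range embeddings) to act freely on the action of the smaller group $\Lambda$. This is where the fixed-point-preservation clause built into Theorem \ref{THMB2} is essential rather than cosmetic. The induction hypothesis guarantees that for every vertex group $\Gamma_p$ inside $\Lambda$ and every $h\in\Gamma_p$ with ${\rm Fix}_{X_p}(h)=\emptyset$ one has ${\rm Fix}(h)=\emptyset$ in the action of $\Lambda$; since the original hypothesis says $\Gamma_p$ acts freely on $s_f(\Sigma_f)$ for every edge $f$ with $s(f)=p$, and in particular on the nontrivial elements of the embedded copy of $\Sigma_e$ that becomes an edge group of the reduced graph, the nontrivial elements $\sigma$ of $\Sigma_e$ satisfy ${\rm Fix}_{X_p}(\sigma)=\emptyset$, hence ${\rm Fix}(\sigma)=\emptyset$ in $\Lambda$. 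That is exactly the freeness of $\Sigma_e$ (and of $\theta(\Sigma_e)$ in the loop case) required by Lemmas \ref{Folner} and \ref{LemFolFree}. The main obstacle, and the step deserving the most care, is this bookkeeping: verifying that the source and range embeddings of $\Sigma_e$ land in vertex groups whose freeness is covered by the hypothesis, and checking that the fixed-point-preservation property is transmitted correctly through the two graph-of-groups decompositions (loop versus bridge) so that the hypotheses of the base-case remarks are met and the output inherits the fixed-point clause for all vertex groups of $\Gamma$, including those only visible after reassembling $\Gamma$ from $\Lambda$.
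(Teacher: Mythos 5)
Your proof is correct and takes essentially the same approach as the paper: induction on the number of geometric edges of the quotient graph, with the inductive step split according to whether removing the chosen edge leaves $\mathcal{G}$ connected (HNN case, Remark \ref{rmkhnn}) or disconnects it (amalgam case, Remark \ref{rmkfree}), exactly as in the paper's Case 1 and Case 2. Your explicit verification that the fixed-point-preservation clause of the induction hypothesis is what delivers the freeness of the edge group on the action of the smaller fundamental group is precisely the bookkeeping the paper leaves implicit, and you carry it out correctly.
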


\begin{proof}
If $n=1$ then $\Gamma$ is either an amalgamated free product $\Gamma=\Gamma_1*_{\Sigma}\Gamma_2$ where $(\Sigma,\Gamma_1,\Gamma_2)$ satisfies the hypothesis of Theorem \ref{THMA}.2 or an HNN-extension $\Gamma={\rm HNN}(H,\Sigma,\theta)$ where $(H,\Sigma,\theta)$ satisfies the hypothesis of Theorem \ref{THMA}.1. In the amalgamated free product case we use Remark \ref{rmkfree} and in the HNN-extension case we use Remark \ref{rmkhnn} to obtain that $\Gamma$ satisfies the conclusion of the theorem. Let $n\geq 1$ and suppose that the conclusion holds for all $1\leq k\leq n$. Suppose that $\frac{1}{2}|E(\mathcal{G})|=n+1$. Let $e\in\EG$ and let  $\mathcal{G}'$ be the graph obtained from $\mathcal{G}$ by removing the edges $e$ and $\overline{e}$.

\textbf{Case 1: $\mathcal{G}'$ is connected.} It follows from Bass-Serre theory that $\Gamma={\rm HNN}(H,\Sigma,\theta)$ where $H$ is the fundamental group of our graph of groups restricted to $\mathcal{G}'$, $\Sigma=r_e(\Sigma_e)<H$ and $\theta\,:\,\Sigma\rightarrow H$ is given by $\theta=s_e\circ r_e^{-1}$. By the induction hypothesis and Remark \ref{rmkhnn} it follows that $\Gamma$ satisfies the conclusion of the theorem.

\textbf{Case 2: $\mathcal{G}'$ is not connected.} Let $\mathcal{G}_1$ and $\mathcal{G}_2$ be the two connected components of $\mathcal{G}'$ such that $s(e)\in{\rm V}(\mathcal{G}_1)$ and $r(e)\in{\rm V}(\mathcal{G}_2)$. Bass-Serre theory implies that $\Gamma=\Gamma_1*_{\Sigma}\Gamma_2$, where $\Gamma_i$ is the fundamental group of our graph of groups restricted to $\mathcal{G}_i$, $i=1,2$, and $\Sigma=\Sigma_e$ is viewed as a subgroup of $\Gamma_1$ via the map $s_e$ and as a subgroup of $\Gamma_2$ via the map $r_e$. By the induction hypothesis and Remark \ref{rmkfree}, $\Gamma$ satisfies the conclusion of the theorem.
\end{proof}

The proof of Theorem \ref{THMB} follows from Theorem \ref{THMB2} and Lemma \ref{AFtoF} since an almost free action on an infinite set is faithful.

\end{document}